\newtheorem{theorem}{Theorem}
\newtheorem{lemma}{Lemma}
\newtheorem*{remark}{Remark}
\newtheorem{assumption}{Assumption}
\newcommand{\T}{\mathcal{T}}
\newcommand{\abs}[1]{| #1 |}
\title{\LARGE \bf
Dynamic Median Consensus Over Random Networks
}
\author{Shuhua Yu, Yuan Chen, Soummya Kar
\thanks{S. Yu and S. Kar are with Department of Electrical and Computer Engineering,
        Carnegie Mellon University, 15213 Pittsburgh, The PA
        {\tt\small \{shuhuay, soummyak\}@andrew.cmu.edu}}
\thanks{The work of S. Yu and S. Kar was partially supported by the National Science Foundation under grant CNS-1837607.}
}
\begin{document}

\maketitle 
\thispagestyle{empty}
\pagestyle{empty}

\begin{abstract}
This paper studies the problem of finding the median of $N$ distinct numbers distributed across networked agents. Each agent updates its estimate for the median from noisy local observations of one of the $N$ numbers and information from neighbors. We consider a undirected random network that is connected on average, and a noisy observation sequence that has finite variance and almost surely decaying bias. We present a \textit{consensus+innovations} algorithm with \textit{clipped innovations}. Under some regularity assumptions on the network and observation model, we show that each agent's local estimate converges to the set of median(s) almost surely at an asymptotic sublinear rate. Numerical experiments demonstrate the effectiveness of the presented algorithm.
\end{abstract}

\section{Introduction}
The past decades have seen increasing interests in decentralized control and coordination of large scale networked systems. A canonical problem in decentralized control is consensus. The objective of the consensus problem is to ensure that networked agents reach agreement on a common decision. This paper focuses on dynamic \textit{median} consensus, where the local observations are dynamic and noise corrupted, and the considered network is random and connected on average.   

The problem of consensus over multi-agent networks has a rich literature. In particular, the problem of average (or mean) consensus, i.e., finding the mean of the initial states of network agents has been investigated in \cite{kempe2003gossip,olfati2004consensus,xiao2004fast}. The convergence of average consensus algorithms have been studied in switching topology and networks  with time-delays \cite{olfati2004consensus}, networks with random link failures \cite{kar2007distributed}; and \cite{xiao2004fast, kar2008topology} study topology and weight matrix designs for fast convergence respectively. However, average consensus protocols can be vulnerable to attacks in large scale networks like IoT \cite{leblanc2012consensus}, e.g., a single attack on the initial state of one agent can arbitrarily manipulate the network average. 

As a result, consensus on a more robust statistical measure, like median, has been of research interest \cite{ben2015robust, franceschelli2016finite, pilloni2016robust, dashti2019dynamic}. Median consensus also finds applications in multi-robot systems \cite{vasiljevic2020dynamic}. Specifically, \cite{franceschelli2014finite, franceschelli2016finite} propose a continuous time protocol that finds the median value of networked agents, whereas \cite{pilloni2016robust} studies median consensus problem in the presence of matching perturbations to the agents' dynamics. The paper \cite{dashti2019dynamic} studies dynamic median consensus where agents have locally time-varying signals and the network is open in the sense that agents may come and leave during the protocol execution.

In this paper, we consider a local observation model in which the network agents obtain noisy measurements of $N$ real numbers (local values or parameters) with decaying bias (expected observation error) and white noise. This observation model arises in scenarios where local measurement perturbations are believed to be decaying sublinearly in expectation while the white noises from measuring devices or environment perturbations are unavoidable. This model also subsumes static or white noise corrupted observations as special cases. The inter-agent communication network we consider is undirected, random, independently and identically distributed (i.i.d.) and only required to be \emph{connected on average}.

Our contributions are as follows. We present an algorithm, under the above observation and network model, that converges to the set of median(s) almost surely and characterize the asymptotic sublinear convergence rate. Moreover, we tackle the observation noise by a carefully designed recursive averaging scheme, and develop a technical lemma of independent interest to show diminishing local averaging errors by carefully balancing the effects of decaying bias and white noise.

To the best of our knowledge, existing works that are most similar to this paper are references~\cite{pilloni2016robust} and~\cite{dashti2019dynamic}. The authors of~\cite{pilloni2016robust} study median consensus in static networks. They consider a setup where each agent perfectly observes its local value, and the agents collaborate to compute the median of all local values. The agents' interactions are subject to disturbances that are \textit{deterministically} bounded in magnitude. Reference~\cite{dashti2019dynamic} proposes an algorithm to track the \textit{instantaneous} median of a set of local reference signals over time-varying graphs. Agents perfectly observe their local reference signals, the reference signals have determinstically bounded time-derivatives, and, the graph must be connected at (almost) all times. 

In contrast to~\cite{pilloni2016robust} and~\cite{dashti2019dynamic}, in this paper, we consider the scenario where agents are unable to perfectly observe their local values. Each agent instead makes a measurement of its local value, subject to measurement noise, which, unlike~\cite{pilloni2016robust} and~\cite{dashti2019dynamic}, is \textit{not} determinstically bounded in magnitude. Further, unlike~\cite{dashti2019dynamic}, we present a median consensus algorithm that does \textit{not} require the network's graph topology to be connected at all times. 

The rest of the paper is organized as follows. In section II, we formulate the median consensus problem with assumptions. In section III we present our algorithm and the main result. Section IV is concerned with intermediate lemmas and is concluded with the proof of the main result. In section IV we provide some numerical experiments on networks with varying connectivity.

\textit{Notation:} We specify the communication network of agents. Agents may exchange messages over a time-varying network denoted as $G(t) = (V, E(t))$ at discrete time $t$. The set of agents $V$ is fixed and $\abs{V} = N$, and we use $[N]$ to denote all indices in the sequel. The set of communication links $E(t)$ is time-varying. For each agent $n$, $\Omega_n(t)$ denotes the set of neighbors at time $t$. We denote the Laplacian of network $G(t)$ as $L(t) = D(t) - A(t)$ where $D(t)$ is a diagonal matrix whose $i$-th diagonal entry is the number of neighbors of agent $i$ and $A_{i, j}(t) = 1$ if there exists a link between node $i$ and node $j$ at time $t$, otherwise $A_{i, j}(t) = 0$. A network is connected if and only if the second largest eigenvalue of its Laplacian matrix is positive \cite{mohar1991laplacian}. We use $\|\cdot\|_2$ to denote Euclidean norm for vectors. The term $\mathbf{1}_N$ represents an $N$-dimensional column vector whose elements are all $1$.

\section{Problem formulation}

We consider the problem that a set of $N$ agents aim to find the median(s) of a set of $N$ distinct numbers $\{\theta_1, \theta_2, \ldots, \theta_N\}$ through local time-sequential observations and and neighboring information exchange over a random network. Each agent $n$ can only access noisy observations of $\theta_n$. We consider a dynamic observation model for each agent $n$,
\begin{equation}
\label{eq:obsmodel}
	\theta_n(t) = \theta_n + w_n(t) + \nu_n(t)
\end{equation}
where $\{w_n(t)\}_{n \in [N], t \ge 0}$ is an i.i.d. noise sequence with zero mean and finite variance, and the $\nu_n(t)$ is decaying almost surely in that $\mathbb{P}\left(\abs{\nu_n(t)} \le v_0(t+1)^{-\delta}\right) = 1$ for some positive constants $\delta$ and $v_0$.

This formulation subsumes several models of local observations. Generally, this model considers observation errors with decaying bias and white noise. The decaying bias may be a result of local computation, or when the local observations are a result of another underlying iterative and convergent computational process, whereas, white noise is due to unavoidable measurement errors from physical devices, environment, etc. Moreover, by taking $\delta$ to be arbitrarily large we recover the special model corresponding to unbiased local observations, whereas, by setting $w_n(t)$ to be 0 almost surely the problem is reduced to the simplest static case. 

For convenience of notation, let us assume $\theta_1 < \theta_2 < \ldots < \theta_N$, and define $d_\min$ as the minimal gap between distinct elements in $\{\theta_n\}_{n = 1, \ldots, N}$,
\begin{equation}
\label{eq:dmin}
	d_{\min} = \min\{\abs{\theta_i - \theta_j}: i \ne j, i, j \in [N]\} > 0.
\end{equation}
We define the set of medians $\Theta$
\begin{equation*}
\Theta = 
\begin{cases}
	\{\theta_{\frac{N+1}{2}}\}, & N \text{ is odd}, \\
	\left[\theta_{\frac{N}{2}}, \theta_{\frac{N}{2}+1}\right], & N \text{ is even}, 
\end{cases}
\end{equation*}
and the distance function $\text{dist}(x, \Theta) := \min_{\theta \in \Theta}|x - \theta|$.

We now formally state our assumptions on the inter-agent communication network and the observation model.
\begin{assumption}
\label{ass:network}
	The time-varying inter-agent communication network is given by a random graph sequence $\{G(t)\}$. For all $t$, $G(t)$ is an undirected graph and the Laplacian sequence $\{L(t)\}$ associated with $\{G(t)\}$ is an i.i.d. sequence whose expectation, denoted as $\bar{L}$, exists and satisfies $\lambda_2(\bar{L}) > 0$.
\end{assumption}
\begin{remark}
	The assumption clearly subsumes the typical case of a static network $G = (V, E)$ where $G$ is connected. The above assumption is quite general and subsumes phenomena such as random link failures, i.e., in which the links in $G$ have  failure probabilities in $[0, 1)$, which models many practical networks such as wireless networks. Again, this assumption contrasts this work with \cite{dashti2019dynamic} in that this assumption allows models such as gossip based protocols in which no network instance (the stochastic realization) is connected, while \cite{dashti2019dynamic} requires each network instance to be connected.
\end{remark}

\begin{assumption}
\label{ass:noise}
	The observation noise $w_n(t)$ is i.i.d. distributed over time and independent across agents. For any $n$ and $t$, $\mathbb{E}\left(w_n(t)\right) = 0$ and $\mathrm{Var}(w_n(t)) < \infty$. For any $n$, the perturbation $v_n(t)$ decays a.s. in that $\mathbb{P}\left(\abs{v_n(t)} \le v_0(t+1)^{-\delta}\right) = 1$ for some positive constants $\delta$ and $v_0$.
\end{assumption}

Let $(\Omega, \mathcal{F})$ be the probability space where random variables $L(t), w_n(t)$ are defined, and let $\{\mathcal{F}(t)\}$ be the corresponding natural filtration, i.e., $\mathcal{F}(t)$ is the sigma algebra $\sigma\left(\{L(t')\}_{t' = 0, \ldots, t}, \{w_n(t)\}_{n \in [N]}\right)$. In this paper, unless otherwise stated, all inequalities involving random variables hold almost surely (a.s.).

\section{Algorithm and main result}
We present the following algorithm to estimate $\Theta$. Let $x_n(t)$ be the estimate of agent $n$ at time $t$, we update its estimate as follows, 
\begin{equation}
\begin{split}
\label{al:each}
	x_n(t+1) 
	= x_n(t) & - \beta_t \sum_{m \in \Omega_n(t)}^{}(x_n(t) - x_m(t))) \\ 
	& - \alpha_t k_n(t)(x_n(t) - \bar{\theta}_n(t)), 
\end{split}
\end{equation}
where $\bar{\theta}_n(t)$ is some recursive weighted average of observations $\{\theta_n(t')\}_{0 \le t' \le t}$ given by, for some $0 < c_\mu \le 1, 0 < \mu < 1$, 
\begin{equation}
\label{eq:localavg}
	\bar{\theta}_n(t+1) = (1 - \frac{c_\mu}{(t+1)^\mu})\bar{\theta}_n(t) + \frac{c_\mu}{(t+1)^\mu} \theta_n(t), 
\end{equation}
and the step sizes 
\begin{equation}
\begin{split}
\label{a1:coef}
	\alpha_t = \frac{\alpha_0}{(t + 1)^{\tau_1}}, \ \beta_t = \frac{\beta_0}{(t+1)^{\tau_2}},
\end{split}
\end{equation}
satisfy $\alpha_0, \beta_0 > 0, 0 < \tau_2 < \tau_1 < 1$. The term $k_n(t)$ in \eqref{al:each}  is the clipping operator, defined as,
\begin{equation}
\label{al:k}
	k_n(t) = 
	\begin{cases}
	1, & \text{if } |x_n(t) - \bar{\theta}_n(t)| \le \gamma_t, \\
	\gamma_t |x_n(t) - \bar{\theta}_n(t)|^{-1}, & \text{otherwise,}
	\end{cases}
\end{equation}
for 
\begin{equation}
\label{al:gamma}
	\gamma_t = \frac{\gamma_0}{(t+1)^{\tau_3}},
\end{equation}
with $\gamma_0 > 0$ and
\begin{equation}
\label{al:t3}
	\tau_3 < \min\{1 - \tau_1, 0.5 \delta_0\},
\end{equation} 
where $\delta_0 = 1 - \bar{\epsilon}$ and $\mu = \delta_0$ for any $0 < \bar{\epsilon} < 1$ if $\delta \ge 1$, otherwise $\delta_0 = \delta$ and $\delta \le \mu < 1$. 

We next refer to the algorithm given by \eqref{al:each}-\eqref{al:t3} as DMED for short, a Distributed Median Estimator for Dynamic observations, which is a \textit{consensus+innovations} type estimator \cite{kar2012distributed} equipped with \textit{clipped innovations}.

\begin{remark}
DMED is similar to the SAGE algorithm in \cite{chen2019resilient}. The computation goal in this paper, however, is different from that of~\cite{chen2019resilient}. SAGE is an algorithm for resilient distributed \textit{estimation}, where a network of agents measure the \textit{same} underlying parameter $\theta$ and attempt to recover its value from these measurements. That is, in~\cite{chen2019resilient}, the value of $\theta$ is the same for each agent. In contrast, this paper addresses distributed dynamic median consensus, where the agents have \textit{different} local values of $\theta_n$. As such, the analysis of the convergence of the DMED algorithm requires new techniques not found in~\cite{chen2019resilient}, which we will present in Section~\ref{sect: proof}.
\end{remark}

\begin{remark}
	The DMED algorithm essentially mimics the behavior of decentralized subgradient descent with errors for an $l_1$ minimization problem $\min_{\theta} \sum_{n=1}^{N}\abs{\theta_n - \theta}$ whose optimal solution set is $\Theta$. 
\end{remark}

We present the main result on the convergence of DMED.
\begin{theorem}\label{thm:main}
	Under Assumptions \ref{ass:network}-\ref{ass:noise}, the local median estimate of every agent $n \in [N]$ in Algorithm~\eqref{al:each} converges to $\Theta$ a.s. in that $\mathbb{P}\left(\lim_{t \rightarrow \infty } (t+1)^{\tau_3} \text{dist}(x_n(t), \Theta) = 0\right) = 1$ for all $n$ simultaneously. 
\end{theorem}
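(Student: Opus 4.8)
The plan is to split the $N$ coupled scalar recursions into a network-average part $\bar x(t)=\tfrac1N\mathbf{1}_N^\top\mathbf{x}(t)$ and a disagreement part $\tilde{\mathbf{x}}(t)=\mathbf{x}(t)-\bar x(t)\mathbf{1}_N$, to show the disagreement vanishes faster than $\gamma_t$ using the average connectivity $\lambda_2(\bar L)>0$, and to read the average recursion as a perturbed subgradient iteration for the polyhedral function $f(\theta)=\tfrac1N\sum_{n=1}^N\abs{\theta-\theta_n}$, whose minimizer set is exactly $\Theta$ (as noted in the remarks). Stacking \eqref{al:each} as $\mathbf{x}(t+1)=(I-\beta_t L(t))\mathbf{x}(t)-\alpha_t K(t)(\mathbf{x}(t)-\bar\theta(t))$ with $K(t)=\mathrm{diag}(k_1(t),\dots,k_N(t))$ and $\bar\theta(t)=(\bar\theta_1(t),\dots,\bar\theta_N(t))^\top$, the clipping \eqref{al:k}--\eqref{al:gamma} forces every entry of $K(t)(\mathbf{x}(t)-\bar\theta(t))$ to have magnitude at most $\gamma_t$; this is the structural point of clipped innovations, since it insulates the $\mathbf{x}$-recursion from the possibly unbounded white noise $w_n(t)$, which enters only through the local averages $\bar\theta_n(t)$. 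Because $I-\beta_t L(t)$ is for large $t$ a nonnegative doubly stochastic matrix and the innovation term always restores $x_n(t)$ toward $\bar\theta_n(t)$, a routine invariance argument (as for analogous clipped \textit{consensus+innovations} schemes) shows $\{\mathbf{x}(t)\}$ is a.s.\ bounded, so the quantities below are well defined pathwise.

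\textit{Step 1 (local averaging error).} First I would prove the technical lemma announced in the abstract. Writing $e_n(t):=\bar\theta_n(t)-\theta_n$, \eqref{eq:localavg} gives $e_n(t+1)=(1-a_{t+1})e_n(t)+a_{t+1}(w_n(t)+\nu_n(t))$ with $a_t=c_\mu(t+1)^{-\mu}$, and I split $e_n=e_n^{b}+e_n^{w}$ along the $\nu_n$- and $w_n$-inputs. For the bias part, $\abs{\nu_n(t)}\le v_0(t+1)^{-\delta}$ a.s.\ and a deterministic comparison lemma for recursions $u_{t+1}=(1-a_{t+1})u_t+a_{t+1}b_t$ with $a_t\asymp t^{-\mu}$ and $\abs{b_t}=O(t^{-\delta})$ give $\abs{e_n^{b}(t)}=O(t^{-\min\{\delta,\mu\}})$ (up to a logarithmic factor if $\delta=\mu$). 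For the noise part, $e_n^{w}(t)$ is a weighted sum of the independent zero-mean finite-variance $w_n(t)$; its variance recursion yields $\mathbb{E}[e_n^{w}(t)^2]=O(t^{-\mu})$, and a maximal inequality over dyadic time blocks upgrades this to $e_n^{w}(t)=o(t^{-\mu/2+\epsilon})$ a.s.\ for every $\epsilon>0$. The algorithm's parameter choice makes $\min\{\delta,\mu\}=\delta_0$ and, via \eqref{al:t3}, $\tau_3<\tfrac{1}{2}\delta_0<\delta_0$, so $(t+1)^{\tau_3}e_n(t)\to0$ a.s.

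\textit{Step 2 (disagreement error).} Since $\mathbf{1}_N^\top L(t)=0$ and $(L(t)-\bar L)\mathbf{1}_N=0$, projecting the stacked recursion onto $\mathbf{1}_N^\perp$ gives, on that subspace, $\tilde{\mathbf{x}}(t+1)=(I-\beta_t\bar L)\tilde{\mathbf{x}}(t)-\beta_t(L(t)-\bar L)\tilde{\mathbf{x}}(t)+\alpha_t\eta(t)$ with $\|\eta(t)\|_2\le\sqrt{N}\,\gamma_t$. Taking squared norm and conditional expectation, $\lambda_2(\bar L)>0$ yields a contraction factor $1-c\beta_t$ on $\mathbf{1}_N^\perp$ for some $c>0$; the Laplacian fluctuation (uniformly bounded, since graph Laplacians on $N$ nodes are) contributes a mean-zero martingale term plus an $O(\beta_t^2)\|\tilde{\mathbf{x}}(t)\|_2^2$ correction absorbed by the contraction; and the clipped innovation contributes a deterministic $O(\alpha_t^2\gamma_t^2/\beta_t)$ by Young's inequality. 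As $\tau_2<\tau_1$ gives $\alpha_t/\beta_t\to0$, this forcing is $o(\beta_t\gamma_t^2)$, so a standard decaying-gain recursion lemma (of the type used for \textit{consensus+innovations} algorithms, normalizing by $\gamma_t^{-2}$ and using $\gamma_t/\gamma_{t+1}=1+o(\beta_t)$ and $\sum_t\beta_t=\infty$) yields $\|\tilde{\mathbf{x}}(t)\|_2=o(\gamma_t)$ a.s.; in particular $(t+1)^{\tau_3}\tilde x_n(t)\to0$ for every $n$.

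\textit{Step 3, and the main obstacle.} Using $\mathbf{1}_N^\top L(t)=0$ the average obeys $\bar x(t+1)=\bar x(t)-\tfrac{\alpha_t}{N}\sum_{n=1}^N k_n(t)(\bar x(t)-\theta_n+r_n(t))$, where $r_n(t):=\tilde x_n(t)-e_n(t)$ has $\max_n\abs{r_n(t)}=o(\gamma_t)$ a.s.\ by Steps 1--2. Fix a path in that a.s.\ event and take $T$ beyond which $\max_n\abs{r_n(t)}<\min\{\gamma_t,d_{\min}/3\}$. For $t\ge T$ I would argue pathwise in two regimes. When $\text{dist}(\bar x(t),\Theta)\ge3\gamma_t$, every summand has $\abs{\bar x(t)-\theta_n}>\gamma_t+\abs{r_n(t)}$, so $k_n(t)(x_n(t)-\bar\theta_n(t))=\gamma_t\,\mathrm{sign}(\bar x(t)-\theta_n)$ and the increment equals $-\alpha_t\gamma_t$ times a subgradient of $f$ at $\bar x(t)$; since the $\theta_n$ are distinct, this subgradient is directed into $\Theta$ with magnitude at least $1/N$ whenever $\bar x(t)\notin\Theta$, so $\bar x(t)$ moves toward $\Theta$ by between $\alpha_t\gamma_t/N$ and $\alpha_t\gamma_t$ and cannot overshoot. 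Because $\tau_1+\tau_3<1$ makes $\sum_t\alpha_t\gamma_t=\infty$, the iterate cannot stay in this regime; and once $\text{dist}(\bar x(t),\Theta)<3\gamma_t$, the same sign bookkeeping (now only the single nearest ``toward'' $\theta_n$ may be unclipped, still with the correct sign as $\text{dist}(\bar x(t),\Theta)>\max_n\abs{r_n(t)}$) gives $\text{dist}(\bar x(t+1),\Theta)\le\max\{(1-\tfrac{\alpha_t}{N})\text{dist}(\bar x(t),\Theta)+o(\alpha_t\gamma_t),\tfrac{\alpha_t\gamma_t}{N}\}$, while for even $N$ one checks separately that $\Theta$ is forward invariant once entered. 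Dividing by $\gamma_{t+1}$ (using $\gamma_t/\gamma_{t+1}=1+O(1/t)=1+o(\alpha_t)$) and invoking $\sum_t\alpha_t=\infty$ then forces $\text{dist}(\bar x(t),\Theta)/\gamma_t\to0$, i.e.\ $\text{dist}(\bar x(t),\Theta)=o((t+1)^{-\tau_3})$ a.s.; combined with Step 2 and $\text{dist}(x_n(t),\Theta)\le\text{dist}(\bar x(t),\Theta)+\abs{\tilde x_n(t)}$ this proves the theorem for all $n$ on one a.s.\ event. This last step is the main obstacle: $f$ is polyhedral and its subgradient is piecewise-constant and combinatorial while $\gamma_t$ and $\alpha_t$ both vanish, so ``pulled toward $\Theta$ when far'' and ``trapped in a shrinking neighborhood when close'' must be balanced against each other and against $\max_n\abs{r_n(t)}=o(\gamma_t)$, and the exponent $\tau_3$ itself must be squeezed out of the competition $\sum\alpha_t\gamma_t=\infty$ versus $\abs{\bar x(t+1)-\bar x(t)}=O(\alpha_t\gamma_t)$; a secondary difficulty is making Step 1 quantitative (the $o(t^{-\tau_3})$ rate, not mere a.s.\ convergence), which is where $\mu=\delta_0$ and $\tau_3<\tfrac{1}{2}\delta_0$ are used.
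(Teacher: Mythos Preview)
Your three-step decomposition (local averaging error, consensus error, network-average dynamics) and the far/close regime split in Step~3 mirror the paper's architecture precisely. Steps~1--2 reach the right conclusions; your bias/noise splitting in Step~1 is a legitimate alternative to the paper's unified supermartingale construction (Lemma~\ref{lm:obsbd}), and Step~2 is essentially the content the paper imports as Lemma~\ref{lm:aymcns}.

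There is, however, a genuine gap in Step~3. The claim ``when $\text{dist}(\bar x(t),\Theta)\ge 3\gamma_t$, every summand has $\abs{\bar x(t)-\theta_n}>\gamma_t+\abs{r_n(t)}$'' is false: being far from the \emph{median set} does not prevent $\bar x(t)$ from being arbitrarily close to some \emph{non-median} $\theta_q$. For instance, with $N=5$, $\theta_n=n$, $\bar x(t)=1.01$, $\gamma_t=0.1$, one has $\text{dist}(\bar x(t),\Theta)=1.99\ge 3\gamma_t$ yet $\abs{\bar x(t)-\theta_1}=0.01<\gamma_t$, so the first innovation is unclipped. Your subsequent identification of the increment with $-\alpha_t\gamma_t$ times a subgradient of $f$ therefore fails as written.

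The paper closes this gap (Lemma~\ref{lm:globalcvg}, Step~1 and Step~2b) by first using the separation $d_{\min}$: once $2\gamma_t+2\max_n\abs{r_n(t)}<d_{\min}$, \emph{at most one} index $q$ can have $\abs{x_q(t)-\bar\theta_q(t)}\le\gamma_t$. If such a $q$ exists with $\theta_q\notin\Theta$ and (say) $\bar x(t)$ lies below the median(s), then $\theta_q$ must also lie below, and a sign count over the remaining $N-1$ clipped terms gives $\sum_{n\ne q}\text{sign}(\bar x(t)-\theta_n)\le -2$; their net push toward $\Theta$ is at least $2\alpha_t\gamma_t/N$, which dominates the single unclipped term (of magnitude at most $\alpha_t\gamma_t/N$). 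Your desired conclusion ``moves toward $\Theta$ by at least $\alpha_t\gamma_t/N$'' thus survives, but only after this case split---not from blanket clipping. The same $d_{\min}$-based ``at most one unclipped index'' observation is also what substantiates your close-regime assertion that ``only the single nearest $\theta_n$ may be unclipped''; you invoke $d_{\min}/3$ when choosing $T$ but never actually deploy it. A minor related point: in the close regime your parenthetical ``still with the correct sign as $\text{dist}(\bar x(t),\Theta)>\max_n\abs{r_n(t)}$'' is neither guaranteed (the distance can be zero) nor needed---the paper obtains the contraction in \eqref{eq:avg-contr-odd} by a direct triangle inequality on $(1-\alpha_t/N)(\bar x(t)-\theta_m)-\tfrac{\alpha_t}{N}e_m(t)$, without any sign control on the unclipped median term.
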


Theorem~\ref{thm:main} states that, when $N$ is odd, i.e., when the median is unique, all local estimates simultaneously converge to the same unique median almost surely. When $N$ is even, simultaneously, the distances between each local estimate and the set of medians will converge to 0 almost surely. However, when $N$ is even, local estimates are not guaranteed to converge to a particular point in the set of medians in that the estimates may wander within the set of medians asymptotically. Even when $N$ is even, as Lemma \ref{lm:aymcns} suggests, all local estimates are still guaranteed to reach consensus almost surely. The sample wise convergence rate we obtain is $\mathcal{O}\left(  (t+1)^{-\tau_3}\right)$; for instance, if $\abs{v_n(t)} \le (t+1)^{-1}$, $\delta_0$ can be taken to be 1, and by choosing $\tau_1 = 0.5, \tau_2 = 0.3$, we can set $\tau_3 = 0.4$, to ensure a $\mathcal{O}\left( (t+1)^{-0.4} \right)$ convergence rate.

\section{Proof of Theorem \ref{thm:main}}
\label{sect: proof}
To prove Theorem \ref{thm:main}, we first bound the local observation errors in Lemma \ref{lm:obsbd}, then bound the consensus errors in Lemma \ref{lm:aymcns}. Lemma \ref{lm:localcvg} shows that there exists a local contraction for the distance between network average and the the set of medians. Lemma \ref{lm:globalcvg} states that the network average will always enter the local contraction region. Combing these arguments we prove the theorem.

We use the following independent lemma to upper bound the local observation errors. 
\begin{lemma}
\label{lm:obsbd}
	Let $\{z_t\}$ be a $\mathbb{R}$ valued discrete time process
	\begin{equation}
	\label{eq:lm2}
	    z_{t+1} = (1 - r_1(t)) z_t + r_1(t)(r_2(t) + w(t))
	\end{equation}
	where the sequence $\{r_1(t)\}$ is deterministic with $r_1(t) = a_1(t+1)^{-\mu} \le 1$ for $a_1 > 0$, sequence $\{r_2(t)\}$ is almost surely bounded $\abs{r_2(t)} \le a_2(t+1)^{-\delta}$ for $a_2 > 0$, and $\{w(t)\}$ is i.i.d. random noise with $\mathbb{E}\left(w(t)\right) = 0$ and $\mathrm{Var}(w(t)) = \sigma^2 < \infty$. For $\delta \ge 1$, define $\delta_0 = 1 - \bar{\epsilon}$ for any $0 < \bar{\epsilon} < 1$ and take $\mu = \delta_0$; for $0 < \delta < 1$, define $\delta_0 = \delta$ and take $\mu$ such that $\delta \le \mu < 1$. Then, for any $0 < \epsilon_0 < \delta_0$, we have $\mathbb{P}\left(\lim_{t \rightarrow \infty} (t+1)^{\delta_0 - \epsilon_0} z_t^2 = 0\right) = 1$.
\end{lemma}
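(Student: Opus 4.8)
The plan is to exploit the linearity of \eqref{eq:lm2} in $z_t$ and decompose $z_t = h_t + e_t + y_t$, where $h_t$ is the part driven by the initial condition $z_0$, $e_t$ the part driven by the bias sequence $\{r_2(t)\}$, and $y_t$ the part driven by the noise $\{w(t)\}$, all three carrying the common contraction factors $1-r_1(t)$. Writing $P_t = \prod_{s=0}^{t-1}(1-r_1(s))$ (with $P_0 = 1$), one has $h_t = P_t z_0$; $e_0 = 0$ and $e_{t+1} = (1-r_1(t))e_t + r_1(t)r_2(t)$; and $y_t = P_t M_t$ with $M_t = \sum_{k=0}^{t-1}\frac{r_1(k)}{P_{k+1}}w(k)$ a zero-mean martingale whose predictable quadratic variation $\langle M\rangle_t = \sigma^2\sum_{k=0}^{t-1}r_1(k)^2 P_{k+1}^{-2}$ is deterministic. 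Since $\mu < 1$, comparing $\sum_{s<t}(s+1)^{-\mu}$ with an integral gives $-\log P_t^2 = \Theta((t+1)^{1-\mu})$, so $P_t$ decays faster than any power of $t$ and $(t+1)^{\delta_0-\epsilon_0}h_t^2 \to 0$ is immediate; the work is in bounding $e_t$ and $y_t$.

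First I would record the elementary deterministic fact that a nonnegative sequence satisfying $u_{t+1} \le (1 - c_1(t+1)^{-\mu})u_t + c_2(t+1)^{-q}$ with $0 < \mu < 1$ and $q > \mu$ obeys $u_t = O((t+1)^{\mu - q})$, proved by the ansatz $u_t \le C(t+1)^{\mu-q}$ with the needed slack supplied by $(t+1)^{1-\mu} \to \infty$. Applied to $\abs{e_{t+1}} \le (1-r_1(t))\abs{e_t} + a_1a_2(t+1)^{-\mu-\delta}$ (which holds a.s. since $\abs{r_2(t)} \le a_2(t+1)^{-\delta}$), it yields $\abs{e_t} = O((t+1)^{-\delta})$ a.s., hence $(t+1)^{\delta_0-\epsilon_0}e_t^2 = O((t+1)^{\delta_0-\epsilon_0-2\delta}) \to 0$ since $2\delta > \delta_0$ in both regimes. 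Applied instead to $\mathbb{E}[y_t^2]$, which satisfies $\mathbb{E}[y_{t+1}^2] \le (1-r_1(t))\mathbb{E}[y_t^2] + a_1^2\sigma^2(t+1)^{-2\mu}$ (using $(1-r_1)^2 \le 1-r_1$ and $r_1(t) \le 1$), it delivers the crucial balancing estimate $P_t^2\langle M\rangle_t = \mathbb{E}[y_t^2] = O((t+1)^{-\mu})$. Together with the bias bound this gives $\mathbb{E}[z_t^2] = O((t+1)^{-\min(2\delta,\mu)})$, and this is exactly where the parameter prescriptions ($\mu = \delta_0$ for $\delta \ge 1$, and $\delta \le \mu < 1$ for $\delta < 1$) are used, via $\min(2\delta,\mu) \ge \delta_0$.

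The remaining and main task is to upgrade the $L^2$ rate for the noise term $y_t = P_t M_t$ into an almost-sure rate. I would use an iterated-logarithm bound for the martingale $M_t$: since $\langle M\rangle_t \to \infty$ and the deterministic weights $r_1(k)/P_{k+1}$ are asymptotically negligible relative to $\langle M\rangle_t^{1/2}$ (uniformly in $k \le t$), while $w$ has finite variance, the law of the iterated logarithm for weighted i.i.d.\ sums applies and gives $M_t^2 = O(\langle M\rangle_t \log\log\langle M\rangle_t)$ a.s. Because $\log\langle M\rangle_t = \Theta((t+1)^{1-\mu})$ we have $\log\log\langle M\rangle_t = O(\log(t+1))$, so $y_t^2 = P_t^2 M_t^2 = O(P_t^2\langle M\rangle_t \log\log\langle M\rangle_t) = O((t+1)^{-\mu}\log(t+1))$ a.s., whence $(t+1)^{\delta_0-\epsilon_0}y_t^2 = O((t+1)^{\delta_0-\epsilon_0-\mu}\log(t+1)) \to 0$ a.s., using $\mu \ge \delta_0 > \delta_0-\epsilon_0$ so that the strict gap $\epsilon_0$ kills the logarithm. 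Combining via $z_t^2 \le 3(h_t^2 + e_t^2 + y_t^2)$ completes the proof.

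The main obstacle is this last step. Working directly with the recursion for $z_t^2$ does not help: it is only a conditional-expectation inequality, not a pathwise one, and the one-step conditional variance of $z_t^2$ involves $\mathbb{E}[w^4]$, which is not assumed finite; moreover $\sum_s r_1(s)^2$ may diverge, so the convergence is not of classical Robbins--Monro type and neither a Robbins--Siegmund argument nor Borel--Cantelli combined with a Doob maximal inequality along a subsequence is sharp enough, each losing a polynomial factor. One therefore genuinely needs an iterated-logarithm-level bound on $M_t$, and verifying its hypotheses for a weighted i.i.d.\ sum under only a second-moment assumption is the delicate point; the logarithmic overshoot it produces is precisely what forces the arbitrarily small but positive $\epsilon_0$ in the statement. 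By contrast, the estimates $P_t^2\langle M\rangle_t = O((t+1)^{-\mu})$ and $-\log P_t^2 = \Theta((t+1)^{1-\mu})$ are routine integral comparisons, and lining up the exponents so that $\min(2\delta,\mu) \ge \delta_0$ is the ``careful balancing of decaying bias and white noise'' referred to in the introduction.
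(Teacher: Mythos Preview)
Your proposal is correct in outline but takes a genuinely different route from the paper, and along the way you dismiss as infeasible precisely the argument the paper uses.

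The paper does not decompose $z_t$; it works directly with the conditional recursion for $z_t^2$. From $z_{t+1} = (1-r_1)z_t + r_1 r_2 + r_1 w$ one gets, using only $\mathbb{E}[w\mid\mathcal{F}_t]=0$ and $\mathbb{E}[w^2]=\sigma^2$,
\[
\mathbb{E}\bigl[z_{t+1}^2\mid\mathcal{F}_t\bigr] = \bigl[(1-r_1)z_t + r_1 r_2\bigr]^2 + r_1^2\sigma^2 \le \Bigl(1 - \frac{c_2}{(t+1)^{\mu}}\Bigr)z_t^2 + \frac{c_3}{(t+1)^{\mu+\delta_0}},
\]
which needs no fourth moment of $w$: the quantity appearing is the conditional \emph{mean} of $z_{t+1}^2$, not its conditional variance. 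Taking expectations gives $(t+1)^{\delta_0-\epsilon_0}\mathbb{E}[z_t^2]\to 0$ by the same deterministic comparison you invoke. For the a.s.\ upgrade, the paper sets $V(t)=(t+1)^{\delta_0-\epsilon_0}z_t^2$ minus a vanishing deterministic compensator, checks that $V$ is a supermartingale bounded below, applies the supermartingale convergence theorem to get $V(t)\to V_*$ a.s., and then uses Fatou together with the $L^1$ rate to force $\mathbb{E}[V_*]=0$, hence $V_*=0$ a.s. This is exactly a Robbins--Siegmund-type argument, which you ruled out on the grounds that it would require $\mathbb{E}[w^4]<\infty$; that objection conflates the conditional variance of $z_{t+1}^2$ (which would involve a fourth moment, but is never needed for supermartingale convergence) with its conditional mean.

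Your decomposition $z_t=h_t+e_t+y_t$ and the bounds on $h_t$, $e_t$, and $\mathbb{E}[y_t^2]$ are fine, and the LIL step for $y_t$ is plausible: the Lindeberg condition does hold for the weights $r_1(k)/P_{k+1}$ since $\max_{k\le t} a_k^2/\langle M\rangle_t\to 0$. But the upper martingale LIL under only a Lindeberg-type hypothesis is a considerably heavier tool than the paper's supermartingale-plus-Fatou argument, and pinning down a citation that covers exactly ``weighted i.i.d.\ with finite second moment and uniform asymptotic negligibility'' takes some care. The paper's route avoids this entirely, trading your sharp pathwise bound $y_t^2=O\bigl((t+1)^{-\mu}\log(t+1)\bigr)$ for the softer but fully adequate conclusion that the scaled process converges a.s.\ to a nonnegative limit whose expectation is zero.
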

\begin{proof}
	 We consider sample paths where $\abs{r_2(t)} \le a_2(t+1)^{-\delta}$ holds for all $t$. We first show that $\mathbb{E}\left(|z_{t}|\right) < \infty$. Taking absolute value on the recursion~\eqref{eq:lm2} gives
	 
   \begin{equation*}
    \abs{z_{t+1}}
    \le (1 - \frac{a_1}{(t+1)^{\mu}})\abs{z_t} + \frac{a_1}{(t+1)^{\mu}}\left[\frac{a_2}{(t+1)^{\delta}} + \abs{w(t)}\right].
   \end{equation*}
   By Jensen's inequality we have $\mathbb{E}\left(|w(t)|\right) \le \sqrt{\mathbb{E}\left(w^2(t)\right)} = \sigma$. Then, there exists some constant
   $c_1 > a_1 \delta$ such that for sufficiently large $t$, 
   \begin{equation*}
      \mathbb{E}\left(\abs{z_{t+1}}\right) \le (1- \frac{a_1}{(t+1)^{\mu}}) \mathbb{E}\left(\abs{z_t}\right) + \frac{c_1}{(t+1)^{\mu}}.
   \end{equation*}
   By Lemma 4.1 in \cite{kar2013distributed} we have $\mathbb{E}\left(\abs{z_{t}}\right) < \infty$ for all $t$. Given the independence condition of $\{w(t)\}$, for sufficiently large $t$, there exist constants $c_2, c_3 > 0$ such that
   \begin{equation}
   \label{eq:exprecursion}
   \begin{split}
       \mathbb{E}\left(z^2_{t+1}\right)
       & \le \mathbb{E}\left( (1 - \frac{a_1}{(t+1)^{\mu}}) z_t + \frac{a_1 a_2}{(t+1)^{\mu + \delta}}\right)^2 \\
       & \quad + \frac{a_1^2}{(t+1)^{2\mu}} \mathbb{E}\left(w^2(t)\right) \\
       & \le (1 - \frac{a_1}{(t+1)^{\mu}})^2 \mathbb{E}\left(z_t^2\right) + \frac{a_1^2 a_2^2}{(t+1)^{2\mu + 2\delta}} \\
       & \quad +  (1 - \frac{a_1}{(t+1)^\mu} )\frac{2a_1 a_2}{(t+1)^{\mu+\delta}} \mathbb{E}\left(\abs{z_t}\right)  \\
       & \quad + \frac{a_1^2 \sigma^2}{(t+1)^{2\mu}} \\
       & \le (1 - \frac{c_2}{(t+1)^{\mu}}) \mathbb{E}\left(z_t^2\right) + \frac{c_3}{(t+1)^{\mu + \delta_0}}. 
    \end{split}
   \end{equation}
   The last inequality is due to the definition of $\delta_0$ which implies $\mu + \delta_0 = \min\{2\mu, \mu + \delta\}$ and $0 < \mu < 1$. By Lemma 4.2 in \cite{kar2013distributed}, relation (\ref{eq:exprecursion}) leads to that for any $0 < \epsilon_0 < \delta_0$, we have
   \begin{equation}
   \label{eq:ztmsecvg}
       \lim_{t \rightarrow \infty} (t+1)^{\delta_0 - \epsilon_0} \mathbb{E}\left(z_{t}^2\right) = 0. 
   \end{equation}
   Now we fix $\epsilon_0$. The definition of $\delta_0$ implies $0 < \delta_0 < 1$ and thus $(t+1)^{\delta_0 - \epsilon_0}$ is concave in $t$. Hence, we have
   \begin{equation*}
       (t+2)^{\delta_0 - \epsilon_0} \le (t+1)^{\delta_0 - \epsilon_0}[1 + (\delta_0 - \epsilon_0) (t+1)^{-1}].
   \end{equation*}
   Combining this fact with relation (\ref{eq:exprecursion}) gives that for sufficiently large $t$,
   \begin{equation}
   \label{eq:vt-med-bound}
   \begin{split}
     & (t+2)^{\delta_0 - \epsilon_0}\mathbb{E}\left(z_{t+1}^2\right) \\
     & \le \left[1 - \frac{c_2}{(t+1)^\mu} + \frac{\delta_0 - \epsilon_0}{t+1} - \frac{c_2(\delta_0 - \epsilon_0)}{(t+1)^{\mu + 1}}\right] \\
     & \quad \cdot (t+1)^{\delta_0 - \epsilon_0} \mathbb{E}\left(z_t^2\right) + \frac{c_3}{(t+1)^{\mu + \epsilon_0}}\left(1 + \frac{\delta_0 - \epsilon_0}{t+1}\right) \\
     & \le (1 - \frac{c_5}{(t+1)^{\mu}}) (t+1)^{\delta_0 - \epsilon_0} \mathbb{E}\left(z_t^2\right) + \frac{c_6}{(t+1)^{\mu + \epsilon_0}}
   \end{split}
   \end{equation}
    for some constants $c_5, c_6 > 0$. Define the process
    \begin{equation*}
    \begin{split}
      V(t) & = (t+1)^{\delta_0 - \epsilon_0} z_t^2 \\
      & \quad - \sum_{i=0}^{t-1}\left[\left(\Pi_{j = i + 1}^{t - 1}(1 - \frac{c_5}{(j+1)^{\mu}})\right) \frac{c_6}{(i+1)^{\mu + \epsilon_0}}\right].
    \end{split}
    \end{equation*}
    An application of Lemma 25 in \cite{kar2012distributed} leads to 
    \begin{equation}
    \label{eq:limit0}
      \lim_{t \rightarrow \infty} \sum_{i=0}^{t-1}\left[\left(\Pi_{j = i + 1}^{t - 1}(1 - \frac{c_5}{(j+1)^{\mu}})\right) \frac{c_6}{(i+1)^{\mu + \epsilon_0}}\right] = 0.
    \end{equation}
    Also note that we can split
    \begin{equation*}
    \begin{split}
       & \sum_{i=0}^{t}\left[\left(\Pi_{j = i + 1}^{t}(1 - \frac{c_5}{(j+1)^{\mu}})\right) \frac{c_6}{(i+1)^{\mu + \epsilon_0}}\right] \\
       &  = \left[1 - \frac{c_5}{(t+1)^{\mu}}\right] \sum_{i=0}^{t-1}\left[\left(\Pi_{j = i+1}^{t - 1}(1 - \frac{c_5}{(j+1)^{\mu}})\right)\frac{c_6}{(i+1)^{\mu + \epsilon_0}}\right] \\
       & \quad + \frac{c_6}{(t+1)^{\mu + \epsilon_0}}.
    \end{split}
    \end{equation*}
    Denote the filtration $\mathcal{F}_z (t)$ the natural filtration of the process $\{(t+1)^{\delta_0 - \epsilon_0}z_t^2\}$, and note that $V(t)$ is adapted to this filtration. Then, by the independence condition,
    \begin{equation*}
    \begin{split}
      & \mathbb{E}\left(V(t+1) \mid \mathcal{F}_z(t)\right) \\
      & = \mathbb{E}\left((t+2)^{\delta_0 - \epsilon_0} z_{t+1}^2\mid \mathcal{F}_z (t)\right) \\
      & \quad - \sum_{i=0}^{t}\left[\left(\Pi_{j = i + 1}^{t}(1 - \frac{c_5}{(j+1)^{\mu}})\right) \frac{c_6}{(i+1)^{\mu + \epsilon_0}}\right] \\
      & \le [1 - \frac{c_5}{(t+1)^{\mu}}] (t+1)^{\delta_0 - \epsilon_0} \mathbb{E}\left(z_t^2\right) + \frac{c_6}{(t+1)^{\mu + \epsilon_0}} \\
      & \quad - \sum_{i=0}^{t}\left[\left(\Pi_{j = i + 1}^{t}(1 - \frac{c_5}{(j+1)^{\mu}})\right) \frac{c_6}{(i+1)^{\mu + \epsilon_0}}\right] \\
      & = (1 - \frac{c_5}{(t+1)^{\mu}}) V(t) \\
      & \le V(t).
    \end{split}
    \end{equation*}
    Therefore, $\{V(t)\}$ is a supermartingale. By \eqref{eq:limit0}, $\{V(t)\}$ is bounded below. It follows that there exists a finite random variable $V_*$ such that $\lim_{t \rightarrow \infty} V(t) = V_*$ almost surely. Thus, we have $\lim_{t \rightarrow \infty} (t+1)^{\delta_0 - \epsilon_0} z_t^2 = V_*$ almost surely. Then, by Fatou's lemma and \eqref{eq:ztmsecvg} we have 
    \begin{equation*}
      0 \le \mathbb{E}\left(\lim_{t \rightarrow \infty} (t+1)^{\delta_0 - \epsilon_0} z_t^2\right) \le \liminf\limits_{t \rightarrow \infty} (t+1)^{\delta_0 - \epsilon_0}\mathbb{E}\left(z_t^2\right) = 0.
    \end{equation*}
    Thus, we have $\mathbb{P}\left(\lim_{t \rightarrow \infty} (t+1)^{\delta_0 - \epsilon_0} z_t^2 = 0 \right) = 1$.
\end{proof}

We study the behavior of \textit{all} of the agents' estimates together, so, for convenience, we introduce the following notation. Let $K_t = \text{diag}([k_1(t), \ldots, k_N(t)]) \in \mathbb{R}^{N \times N}, \mathbf{x}(t) = [x_1(t), \ldots, x_N(t)]^\top \in \mathbb{R}^N$, $\bar{\pmb{\theta}}(t) = [\bar{\theta}_1(t), \ldots, \bar{\theta}_N(t)]^\top \in \mathbb{R}^N$, then \eqref{al:each} can be rewritten as 
\begin{equation}
\label{al:vector}
	\mathbf{x}(t+1) = (I - \beta_t L(t)) \mathbf{x}(t) - \alpha_t K_t(\mathbf{x}(t) - \bar{\pmb{\theta}}(t)).
\end{equation} 

The following result (from~\cite{chen2019resilient}) establishes that the agents reach consensus. Define $P_N = N^{-1}\mathbf{1}_N \mathbf{1}_N^\top$. 
\begin{lemma}[Lemma 1 in \cite{chen2019resilient}]
\label{lm:aymcns} 
	Under Assumption \ref{ass:network}, for every $0 \le \epsilon_1 < \tau_1 - \tau_2 + \tau_3$, the iterates $\mathbf{x}(t)$ in (\ref{al:vector}) satisfies $\mathbb{P}\left(\lim_{t \rightarrow \infty} (t + 1)^{\tau_1 - \tau_2 + \tau_3 - \epsilon_1} \|\mathbf{x}(t) - P_N \mathbf{x}(t)\|_2 = 0\right) = 1$.
\end{lemma}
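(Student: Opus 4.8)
Although the statement is quoted from \cite{chen2019resilient} and may therefore be invoked as-is, I would give a self-contained argument that parallels the proof of Lemma~\ref{lm:obsbd}. The plan is to project the vector recursion \eqref{al:vector} onto the disagreement subspace $\{\mathbf{v}\colon\mathbf{1}_N^\top\mathbf{v}=0\}$, use the clipping operator to bound the innovation term deterministically, derive a scalar mean-square recursion for the disagreement energy $\|\mathbf{x}(t)-P_N\mathbf{x}(t)\|_2^2$, and then upgrade to almost-sure convergence at the stated rate by the supermartingale-plus-Fatou technique already used above.

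Write $\mathbf{x}_\perp(t):=(I-P_N)\mathbf{x}(t)$. Since each $L(t)$ is symmetric with $L(t)\mathbf{1}_N=0$, one has $P_NL(t)=L(t)P_N=0$, and multiplying \eqref{al:vector} by $I-P_N$ gives
\begin{equation*}
  \mathbf{x}_\perp(t+1)=(I-\beta_tL(t))\,\mathbf{x}_\perp(t)-(I-P_N)\,\mathbf{u}(t),\qquad \mathbf{u}(t):=\alpha_tK_t\bigl(\mathbf{x}(t)-\bar{\pmb{\theta}}(t)\bigr).
\end{equation*}
The clipping rule \eqref{al:k}--\eqref{al:gamma} forces $|k_n(t)(x_n(t)-\bar{\theta}_n(t))|\le\gamma_t$ for every $n$, hence $\|(I-P_N)\mathbf{u}(t)\|_2\le\|\mathbf{u}(t)\|_2\le\sqrt{N}\,\alpha_t\gamma_t=\sqrt{N}\,\alpha_0\gamma_0\,(t+1)^{-(\tau_1+\tau_3)}$ almost surely; this is the only place the clipped innovations enter, and it is exactly what keeps the consensus error controllable despite the unbounded observation noise.

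Next, because the vertex set is fixed and $A_{ij}(t)\in\{0,1\}$, $\|L(t)\|_2$ is bounded by a deterministic constant; together with the i.i.d.\ hypothesis and $\lambda_2(\bar{L})>0$ this yields, for all large $t$ and every $\mathbf{v}$ with $\mathbf{1}_N^\top\mathbf{v}=0$,
\begin{equation*}
  \mathbb{E}\bigl[\|(I-\beta_tL(t))\mathbf{v}\|_2^2\bigr]=\mathbf{v}^\top\bigl(I-2\beta_t\bar{L}+\beta_t^2\,\mathbb{E}[L(t)^2]\bigr)\mathbf{v}\le(1-c_1\beta_t)\|\mathbf{v}\|_2^2,
\end{equation*}
the $\beta_t^2$ term being absorbed since $\beta_t\to0$. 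Applying the triangle inequality to the projected recursion, squaring, splitting the cross term by Young's inequality at a weight of order $\beta_t$, and conditioning on $\mathcal{F}(t-1)$ (with respect to which $\mathbf{x}_\perp(t)$ and $\mathbf{u}(t)$ are measurable while $L(t)$ is independent of it) gives, for large $t$,
\begin{equation*}
  \mathbb{E}\bigl[\|\mathbf{x}_\perp(t+1)\|_2^2\mid\mathcal{F}(t-1)\bigr]\le\Bigl(1-\tfrac{c_1}{2}\beta_t\Bigr)\|\mathbf{x}_\perp(t)\|_2^2+c_2\,(t+1)^{-(2\tau_1+2\tau_3-\tau_2)},
\end{equation*}
the forcing term coming from $\beta_t^{-1}\|\mathbf{u}(t)\|_2^2$. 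Since $\tau_2<\tau_1$ and $\tau_3>0$ give $2\tau_1+2\tau_3-\tau_2>\tau_2$ and $\tau_2<1$, the deterministic recursion lemmas of \cite{kar2013distributed} (Lemmas 4.1--4.2) used in Lemma~\ref{lm:obsbd} yield $\lim_t(t+1)^{2(\tau_1-\tau_2+\tau_3-\epsilon_1)}\mathbb{E}[\|\mathbf{x}_\perp(t)\|_2^2]=0$ for every $0\le\epsilon_1<\tau_1-\tau_2+\tau_3$. The almost-sure claim then follows exactly as at the end of Lemma~\ref{lm:obsbd}: multiply the recursion by $(t+1)^{2(\tau_1-\tau_2+\tau_3-\epsilon_1)}$, use $(t+2)^s\le(t+1)^s(1+c/(t+1))$ for all large $t$ with $s:=2(\tau_1-\tau_2+\tau_3-\epsilon_1)$, subtract the accumulated forcing sum (which vanishes by Lemma 25 in \cite{kar2012distributed}) to obtain a nonnegative supermartingale, conclude that $(t+1)^{s}\|\mathbf{x}_\perp(t)\|_2^2$ converges a.s.\ to some finite $V_*\ge0$, and identify $V_*=0$ a.s.\ via Fatou's lemma and the mean-square limit; taking square roots gives the stated convergence.

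The delicate point is the contraction step: the ``connected on average'' hypothesis $\lambda_2(\bar{L})>0$ produces a contraction only \emph{after} taking expectations and only once $\beta_t$ is small enough to dominate $\beta_t^2\mathbb{E}[L(t)^2]$, and the Young-inequality weight must be tuned to order $\beta_t$ so that the residual innovation contribution decays at exponent $2\tau_1+2\tau_3-\tau_2$ --- which is precisely what makes the net consensus rate come out to $\tau_1-\tau_2+\tau_3$ (up to the slack $\epsilon_1$). Everything downstream is a direct replay of Lemma~\ref{lm:obsbd}.
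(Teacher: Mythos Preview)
The paper does not prove this lemma at all: it simply imports it verbatim from \cite{chen2019resilient}, so there is no ``paper's own proof'' to compare against. Your self-contained argument is correct and is, in outline, the standard consensus-error analysis one would expect to find in \cite{chen2019resilient}: project \eqref{al:vector} onto $\mathrm{range}(I-P_N)$, exploit the deterministic bound $\|K_t(\mathbf{x}(t)-\bar{\pmb{\theta}}(t))\|_2\le\sqrt{N}\gamma_t$ afforded by the clipping, obtain a mean-square contraction with rate $\beta_t$ from Assumption~\ref{ass:network}, and then replay the supermartingale/Fatou machinery of Lemma~\ref{lm:obsbd} to pass from $L^2$ convergence to almost-sure convergence at the claimed rate. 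The only step that deserves a second look is the Young-inequality split of the cross term at weight $\beta_t$: you should note explicitly that $\|(I-\beta_tL(t))\|_2$ is deterministically bounded (so the weighted square of the first factor is $O(\beta_t)\|\mathbf{x}_\perp(t)\|_2^2$), and that the inequality $(t+2)^s\le(t+1)^s(1+c/(t+1))$ you invoke holds for \emph{any} fixed $s>0$, not only $s<1$ as in Lemma~\ref{lm:obsbd}, since $2(\tau_1-\tau_2+\tau_3)$ need not be below $1$ under \eqref{a1:coef}--\eqref{al:t3}. With those two remarks made precise, your proof stands on its own.
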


Let $\bar{x}(t) = N^{-1}\mathbf{1}_{N}^\top \mathbf{x}(t)$ denote the network average at time $t$. The following lemma analyze the local contraction of $\bar{x}(t)$.
\begin{lemma}
\label{lm:localcvg}
	Define the auxiliary threshold
	\begin{equation*}
		\bar{\gamma}_t = \gamma_t - \frac{c_{\delta}}{(t+1)^{\delta_1}} - \frac{c_{\eta}}{(t+1)^{\eta}}
	\end{equation*}
	where
	\begin{equation*}
		\delta_1 = 0.5(\delta_0 - \epsilon_0), 
		\eta = \tau_1 - \tau_2 + \tau_3 - \epsilon_1
	\end{equation*}
	for arbitrarily small
	\begin{equation*}
		0 < \epsilon_0 < \delta_0, 0 < \epsilon_1 < \tau_1 - \tau_2.
	\end{equation*}
	Then, almost surely, there exists a finite $T_0$ and positive constants $c_{\delta}, c_{\eta}$ such that if $\text{dist}(\bar{x}(T_1), \Theta) \le \bar{\gamma}_{T_1}$ for some $T_1 \ge T_0$, then $\text{dist}(\bar{x}(t), \Theta) \le \bar{\gamma}_t$ for all $t \ge T_1$. 
\end{lemma}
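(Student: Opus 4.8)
The plan is to pass to a scalar recursion for the network average $\bar{x}(t)$, restrict to a full-probability event supplied by Lemmas~\ref{lm:aymcns} and~\ref{lm:obsbd}, and prove the claimed invariance by induction on $t$. Since $\mathbf{1}_N^\top L(t)=0$, left-multiplying \eqref{al:vector} by $N^{-1}\mathbf{1}_N^\top$ kills the consensus term and gives $\bar{x}(t+1)=\bar{x}(t)-\frac{\alpha_t}{N}\sum_{n=1}^{N}k_n(t)(x_n(t)-\bar{\theta}_n(t))$. Write $e_n(t)=x_n(t)-\bar{x}(t)$ and $b_n(t)=\bar{\theta}_n(t)-\theta_n$, so $x_n(t)-\bar{\theta}_n(t)=(\bar{x}(t)-\theta_n)+e_n(t)-b_n(t)$. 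Lemma~\ref{lm:aymcns} gives $(t+1)^{\eta}\max_n\abs{e_n(t)}\to 0$ a.s.; applying Lemma~\ref{lm:obsbd} to $z_t=b_n(t)$ — the recursion \eqref{eq:localavg} is exactly \eqref{eq:lm2} with $r_1(t)=c_\mu(t+1)^{-\mu}$ and, via \eqref{eq:obsmodel}, $r_2(t)=\nu_n(t)$, $w(t)=w_n(t)$, the parameter conventions coinciding — gives $(t+1)^{\delta_1}\abs{b_n(t)}\to 0$ a.s.\ for every $n$. Note $\eta>\tau_3$ (since $\epsilon_1<\tau_1-\tau_2$) and $\delta_1>\tau_3$ provided $\epsilon_0<\delta_0-2\tau_3$, which is permitted because $\tau_3<0.5\delta_0$. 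On the a.s.\ event where all these limits hold, define the (path-dependent, finite) constants $c_\eta:=1+\sup_t(t+1)^{\eta}\max_n\abs{e_n(t)}$ and $c_\delta:=1+\sup_t(t+1)^{\delta_1}\max_n\abs{b_n(t)}$, so that $\abs{e_n(t)}\le c_\eta(t+1)^{-\eta}$ and $\abs{b_n(t)}\le c_\delta(t+1)^{-\delta_1}$ for all $n$ and all $t$; then $\bar{\gamma}_t=\gamma_0(t+1)^{-\tau_3}(1+o(1))$, in particular $\bar{\gamma}_t>0$ for $t$ past some $T_0$.

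\emph{Clipping structure and one induction step.} Fix such a sample path and take $t\ge T_0$ with $\text{dist}(\bar{x}(t),\Theta)\le\bar{\gamma}_t$. If $\bar{x}(t)\in\Theta$, then $\text{dist}(\bar{x}(t),\Theta)=0$ and, since each summand of $g_t:=\sum_n k_n(t)(x_n(t)-\bar{\theta}_n(t))$ has magnitude at most $\gamma_t$, $\abs{\bar{x}(t+1)-\bar{x}(t)}\le\alpha_t\gamma_t$, giving $\text{dist}(\bar{x}(t+1),\Theta)\le\alpha_t\gamma_t$. Otherwise assume $\bar{x}(t)>\sup\Theta$ (the case $\bar{x}(t)<\inf\Theta$ is symmetric) and let $n^\star$ be the agent with $\theta_{n^\star}=\sup\Theta$. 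Then $\bar{x}(t)-\theta_{n^\star}=\text{dist}(\bar{x}(t),\Theta)\le\bar{\gamma}_t$, so — and this is precisely why $\bar{\gamma}_t$ is defined this way — $\abs{x_{n^\star}(t)-\bar{\theta}_{n^\star}(t)}\le\bar{\gamma}_t+\abs{e_{n^\star}(t)}+\abs{b_{n^\star}(t)}\le\gamma_t$, i.e.\ $k_{n^\star}(t)=1$. For every other $n$, \eqref{eq:dmin} forces $\abs{\bar{x}(t)-\theta_n}\ge d_{\min}-\text{dist}(\bar{x}(t),\Theta)\ge d_{\min}/2>\gamma_t$ for $t$ large, so its innovation is clipped and has the sign of $\bar{x}(t)-\theta_n$ (the perturbation $\abs{e_n}+\abs{b_n}<d_{\min}/2$ cannot flip it). Hence $g_t=(\text{dist}(\bar{x}(t),\Theta)+e_{n^\star}(t)-b_{n^\star}(t))+(\kappa-1)\gamma_t$, where $\kappa=\#\{n:\theta_n<\bar{x}(t)\}-\#\{n:\theta_n>\bar{x}(t)\}\ge1$ by the definition of the median set $\Theta$. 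Therefore $\bar{x}(t+1)-\sup\Theta=\text{dist}(\bar{x}(t),\Theta)(1-\tfrac{\alpha_t}{N})-\tfrac{\alpha_t}{N}(e_{n^\star}(t)-b_{n^\star}(t))-\tfrac{(\kappa-1)\alpha_t\gamma_t}{N}$; since $\alpha_t\gamma_t\to0$ while $\mathrm{diam}(\Theta)\in\{0\}\cup[d_{\min},\infty)$, $\bar{x}(t+1)$ does not overshoot past $\Theta$ for $t$ large, so $\text{dist}(\bar{x}(t+1),\Theta)\le\bar{\gamma}_t(1-\tfrac{\alpha_t}{N})+\tfrac{\alpha_t}{N}(\abs{e_{n^\star}(t)}+\abs{b_{n^\star}(t)})$.

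\emph{Closing the induction.} It remains to verify, deterministically and for $t$ large, that the bound just obtained (and, in the $\bar{x}(t)\in\Theta$ case, $\alpha_t\gamma_t$) is $\le\bar{\gamma}_{t+1}$. The first requirement reduces to $\bar{\gamma}_t-\bar{\gamma}_{t+1}\le\tfrac{\alpha_t}{N}(\bar{\gamma}_t-\abs{e_{n^\star}(t)}-\abs{b_{n^\star}(t)})$ and the second to $\alpha_t\gamma_t\le\bar{\gamma}_{t+1}$. Because $\delta_1,\eta>\tau_3$, the right-hand sides are of order $(t+1)^{-\tau_1-\tau_3}$; on the other hand, convexity of $s\mapsto s^{-\tau_3}$ together with the monotonicity of the subtracted terms in $\bar{\gamma}_t$ gives $\bar{\gamma}_t-\bar{\gamma}_{t+1}\le\gamma_t-\gamma_{t+1}=O((t+1)^{-1-\tau_3})=o((t+1)^{-\tau_1-\tau_3})$, the final equality using $\tau_1<1$. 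Enlarging $T_0$ so as to absorb the finitely many ``for $t$ large'' requirements, the induction closes: if $\text{dist}(\bar{x}(T_1),\Theta)\le\bar{\gamma}_{T_1}$ for some $T_1\ge T_0$, then $\text{dist}(\bar{x}(t),\Theta)\le\bar{\gamma}_t$ for all $t\ge T_1$.

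\emph{Expected main obstacle.} The delicate step is the clipping analysis: establishing that whenever $\bar{x}(t)$ is within $\bar{\gamma}_t$ of $\Theta$ the single boundary agent $n^\star$ is unclipped while every other innovation is clipped with the sign of the matching subgradient summand — the threshold $\bar{\gamma}_t$ is reverse-engineered for precisely this — and then reading off the exact form of $g_t$ using the separation $d_{\min}$ and the median structure of $\Theta$. The accompanying rate comparison, in which $\tau_1<1$ guarantees that the innovation-driven contraction beats the rate at which $\bar{\gamma}_t$ shrinks, is the other place where the structural hypotheses are used essentially.
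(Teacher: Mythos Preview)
Your proposal is correct and follows essentially the same approach as the paper's proof: reduce to the scalar recursion for $\bar{x}(t)$, invoke Lemmas~\ref{lm:obsbd} and~\ref{lm:aymcns} to obtain the sample-path bounds that define $c_\delta,c_\eta$, use the induction hypothesis together with the gap $d_{\min}$ to determine the clipping pattern, and close the induction with the rate comparison $\tau_1<1$. Your execution is somewhat more streamlined than the paper's---you handle odd and even $N$ jointly via the count $\kappa$ rather than the paper's separate Steps~2a/2b, and for the case $\bar{x}(t)\in\Theta$ you use the coarse bound $\text{dist}(\bar{x}(t+1),\Theta)\le\alpha_t\gamma_t\le\bar{\gamma}_{t+1}$ in place of the paper's Step~2c argument that $\bar{x}(t+1)\in\Theta$---but these are presentational choices, not substantive differences.
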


\begin{proof}
Substitute \eqref{eq:obsmodel} into \eqref{eq:localavg}, we obtain, for each $n \in [N]$,
\begin{equation*}
\begin{split}
	\bar{\theta}_n(t) - \theta_n & = (1 - \frac{c_\mu}{(t+1)^\mu})(\bar{\theta}_n(t) - \theta_n) \\
	& \quad + \frac{c_\mu}{(t+1)^\mu}(w_n(t) + \nu_n(t)).
\end{split}
\end{equation*}
Let $z_t = \bar{\theta}_n(t) - \theta_n$ and by applying Lemma \ref{lm:obsbd} for each $n \in [N]$
\begin{equation}
\label{eq:as1}
	\mathbb{P}\left(\lim_{t \rightarrow \infty}(t+1)^{\delta_0 - \epsilon_0}\abs{\bar{\theta}_n(t) - \theta_n}^2 = 0\right) = 1. 
\end{equation}
By Lemma \ref{lm:aymcns} we have for all $n \in [N]$,
\begin{equation}
\label{eq:as2}
	\mathbb{P}\left(\lim_{t \rightarrow \infty}(t+1)^{\eta} \abs{\bar{x}(t) - x_n(t)} = 0\right) = 1.
\end{equation}
We perform the derivations in a sample path $\omega \in \Omega$ such that there exist positive constants $c_{\delta, \omega}, c_{\eta, \omega}$ such that
\begin{equation}
\label{eq:errs_bounds}
\begin{split}
    \abs{\bar{\theta}_n(t, \omega) - \theta_n} \le \frac{c_{\delta, \omega}}{(t+1)^{\delta_1}}, \\ \abs{\bar{x}(t, \omega) - x_n(t, \omega)} \le \frac{c_{\eta, \omega}}{(t+1)^{\eta}}
\end{split}
\end{equation}
hold for all $n \in [N]$. As a consequence of~\eqref{eq:as1}\eqref{eq:as2}, the set of all such sample paths has probability $1$. Define
\begin{equation}
\label{eq:defen}
    e_n(t, \omega) \triangleq x_n(t, \omega) - \bar{x}(t, \omega) + \theta_n - \bar{\theta}_n(t, \omega).
\end{equation}
Then,
\begin{equation}
\label{eq:all_errs_bounds}
\abs{e_n(t, \omega} \le c_{\delta, \omega}(t+1)^{-\delta_1} + c_{\eta, \omega}(t+1)^{-\eta}.
\end{equation}
Now that we have bounded the consensus errors and the local observation errors, we start to analyze the dynamics of the network average $\bar{x}(t, \omega)$. Multiplying $N^{-1} \mathbf{1}^\top$ on both sides of (\ref{al:vector}) leads to
\begin{equation}
\label{eq:avgdym}
	\bar{x}(t+1, \omega) = \bar{x}(t, \omega) -  \frac{\alpha_t}{N} \sum_{n=1}^{N}k_n(t)(x_n(t, \omega) - \bar{\theta}_n(t, \omega)). 
\end{equation}

\textit{Step 1}, we analyze the net effect of agent $n$ where $\theta_n \notin \Theta$, i.e., $\sum_{\theta_n \not \in \Theta} k_n(t)(x_n(t, \omega) - \bar{\theta}_n(t, \omega)$. Since $\tau_3 < \delta_1, \tau_3 < \eta$, we can take $t_1$ as the least $t$ such that $\bar{\gamma}_{t, \omega} > 0$. Also, recall the definition of $d_{\min}$ in (\ref{eq:dmin}), we take $t_2$ as the least $t \ge t_1$ such that $d_\min > 2\gamma_t$ and $\alpha_t < N$. For $t \ge t_2, \theta_n \notin \Theta$, if $\text{dist}(\bar{x}(t, \omega), \Theta) \le \bar{\gamma}_{t, \omega}$, we have
\begin{equation}
\label{eq:eachlb}
\begin{split}
  & \abs{x_n(t, \omega) - \bar{\theta}_n(t, \omega)} \\
  & \ge \abs{\bar{x}(t, \omega) - \theta_n} - \abs{e_n(t, \omega)}\\
  & \ge \text{dist}(\theta_n, \Theta) - \text{dist}(\bar{x}(t, \omega), \Theta) - \abs{e_n(t, \omega)} \\
  & \ge d_\min - \gamma_t > \gamma_t.
\end{split}
\end{equation}
Since $x_n(t, \omega) - \bar{\theta}_n(t, \omega) = e_n(t, \omega) + \bar{x}(t, \omega) - \theta_n$, and by the definition of $t_1$ we have $\abs{e_n(t, \omega)} < \gamma_t$. Combining this with \eqref{eq:eachlb} we have
\begin{equation}
\label{eq:equal-sign}
    k_n(t)(x_n(t, \omega) - \bar{\theta}_n(t, \omega)) = \gamma_t \text{sign}(\bar{x}(t, \omega) - \theta_n).
\end{equation}
By the hypothesis, $\text{dist}(\bar{x}(t, \omega), \Theta) \le \bar{\gamma}_t < \gamma_t < 0.5d_\min$, 
\begin{equation}
\label{eq:in-between}
\begin{split}
    \theta_{\frac{N-1}{2}} < \bar{x}(t, \omega) < \theta_{\frac{N+3}{2}}, & \  N \text{ is odd}, \\
\theta_{\frac{N}{2} - 1} < \bar{x}(t, \omega) < \theta_{\frac{N}{2}+2}, &\ N \text{ is even}.
\end{split}
\end{equation}
Then, by the definition of median we have zero net effect from $\theta_n \notin \Theta$, i.e.,
\begin{equation*}
  \sum_{\theta_n \notin \Theta} k_n(t, \omega)(x_n(t, \omega) - \theta_n) = \sum_{\theta_n \notin \Theta} \gamma_t \text{sign}(\bar{x}(t, \omega) - \theta_n) = 0.
\end{equation*}

\textit{Step 2}, we analyze the effects of all agents $n$ with $\theta_n \in \Theta$, i.e., $\sum_{\theta_n \in \Theta} k_n(t)(x_n(t, \omega) - \bar{\theta}_n(t, \omega)$. We define $m$ as the index that $\theta_m \in \Theta$ and $\text{dist}(\bar{x}(t, \omega), \Theta) = |\bar{x}(t, \omega) - \theta_m|$ if it exists. If $N$ is odd, $m = (N+1)/2$; if $N$ is even and $\bar{x}(t, \omega) \notin \Theta$, $m = N/2$ or $N/2+1$. By the hypothesis and \eqref{eq:all_errs_bounds},
\begin{equation*}
\begin{split}
     |x_{m}(t, \omega) - \bar{\theta}_{m}(t, \omega)| 
     & \le \abs{\bar{x}(t, \omega) - \theta_{m}} + e_{m}(t, \omega) \\
     & \le \bar{\gamma}_{t, \omega} + |e_m(t, \omega | \le \gamma_t.
\end{split}
\end{equation*}
Thus, $k_{m}(t, \omega) = 1$. We next discuss all three cases: $N$ is odd; $N$ is even and $\bar{x}(t, \omega) \notin \Theta$; $N$ is even but $\bar{x}(t, \omega) \in \Theta$. 

\textit{Step 2a}, when $N$ is odd, by step I, (\ref{eq:avgdym}) reduces to
\begin{equation*}
  \bar{x}(t+1, \omega) = \bar{x}(t, \omega) - \frac{\alpha_t}{N}\left(x_{\frac{N+1}{2}}(t, \omega) - \bar{\theta}_{\frac{N+1}{2}}(t, \omega)\right). 
\end{equation*}
It follows that 
\begin{equation}
\begin{split}
\label{eq:avg-contr-odd}
  &  \text{dist}(\bar{x}(t+1, \omega), \Theta) \\
  & = |\bar{x}(t, \omega) - \theta_{\frac{N+1}{2}} - \frac{\alpha_t}{N} [x_{\frac{N+1}{2}}(t, \omega) - \bar{\theta}_{m}(t, \omega)]| \\
  & \le (1 - \frac{\alpha_t}{N})\abs{\bar{x}(t, \omega) - \theta_{\frac{N+1}{2}}} + \frac{\alpha_t}{N}\abs{e_{\frac{N+1}{2}}(t, \omega)} \\
  & \le (1 - \frac{\alpha_t}{N})\bar{\gamma}_{t, \omega} + \frac{\alpha_t}{N} \frac{c_{\delta, \omega}}{(t+1)^{\delta_1}} + \frac{\alpha_t}{N} \frac{c_{\eta, \omega}}{(t+1)^{\eta}}.
\end{split}
\end{equation}
We next show $\text{dist}(\bar{x}(t+1), \omega), \Theta) \le \bar{\gamma}_{t+1, \omega}$. Define
\begin{equation*}
    \Delta_t = (t+1)^{\tau_3}\bar{\gamma}_{t, \omega} = \gamma_0 - \frac{c_{\delta, \omega}}{(t+1)^{\delta_1 - \tau_3}} - \frac{c_{\eta, \omega}}{(t+1)^{\eta - \tau_3}}. 
\end{equation*}
Since $\tau_3 < \delta_1, \tau_3 < \eta$, $\Delta_t$ is increasing with $t$ and thus 
\begin{equation*}
  \bar{\gamma}_{t+1, \omega} = \frac{\Delta_{t+1}}{(t+2)^{\tau_3}} \ge \frac{\Delta_{t}}{(t+2)^{\tau_3}} = \left(\frac{t+1}{t+2}\right)^{\tau_3} \bar{\gamma}_{t, \omega}.
\end{equation*}
To show that $\text{dist}(\bar{x}(t+1), \omega), \Theta) \le \bar{\gamma}_{t+1, \omega}$, it suffices to show that 
\begin{equation*}
   (1 - \frac{\alpha_t}{N})\bar{\gamma}_{t, \omega} + \frac{\alpha_t}{N} \frac{c_{\delta, \omega}}{(t+1)^{\delta_1}} + \frac{\alpha_t}{N} \frac{c_{\eta, \omega}}{(t+1)^{\eta}} \le \left(\frac{t+1}{t+2}\right)^{\tau_3} \bar{\gamma}_{t, \omega},  
\end{equation*}
rearranging the above relation gives that
\begin{equation*}
\begin{split}
    1 - \frac{\alpha_t}{N} \Bigg( \underbrace{1 - \frac{c_{\delta, \omega}}{\Delta_t (t + 1)^{\delta_1 - \tau_3}} - \frac{c_{\eta, \omega}}{\Delta_t(t+1)^{\eta - \tau_3}}}_{\mathcal{T}_1}\Bigg) \le \left(\frac{t+1}{t+2}\right)^{\tau_3}.
\end{split}
\end{equation*}
As $\mathcal{T}_1$ is increasing in $t$, we can take $t_3$ as the least $t \ge t_2$ such that $\mathcal{T}_1 \ge 0$. Since $1 - x \le e^{-x}$ for $x \ge 0$, it suffices to show 
\begin{equation*}
\begin{split}
    \frac{\alpha_t}{N \tau_3}\left(\frac{c_{\delta, \omega}}{\Delta_t(t+1)^{\delta_1 - \tau_3}} + \frac{c_{\eta, \omega}}{\Delta_t(t+1)^{\eta - \tau_3}}  - 1\right)  \le \ln \left(\frac{t + 1}{t+2}\right).
\end{split}
\end{equation*}
Take $t_4$ as the least $t \ge t_3$ such that $\Delta_t \ge \gamma_0/2$. Since $\ln \left(\frac{t + 1}{t+2}\right) \ge 1 - \frac{t+2}{t+1} = -\frac{1}{t+1}$, for $t \ge t_4$, it suffices to show 
\begin{equation*}
  \frac{\alpha_t}{N \tau_3}\left(\frac{2c_{\delta, \omega}}{\gamma_0(t+1)^{\delta_1 - \tau_3}} + \frac{2c_{\eta, \omega}}{\gamma_0 (t+1)^{\eta - \tau_3}}  - 1\right) \le -\frac{1}{t+1},
\end{equation*}
which rearranges to 
\begin{equation*}
\begin{split}
    & \frac{2c_{\delta, \omega}}{\gamma_0 (t+1)^{ \delta_1 - \tau_3}} + \frac{2c_2}{\gamma_0 (t+1)^{\eta - \tau_3}} + \frac{N \tau_3}{\alpha_0(t+1)^{1 - \tau_1}}  \le 1.
\end{split}
\end{equation*}
Since the left hand side is monotonically decreasing to 0, we can take $t_5$ as the least $t \ge t_4$ such that the above relation holds. Taking $T_0 = t_5$ completes this case.

\textit{Step 2b}: when $N$ is even and $\bar{x}(t, \omega) \notin \Theta$, without loss of generality, we consider $\bar{x}(t, \omega) < \theta_{N/2}$, then for $t \ge t_2$
\begin{equation*}
\begin{split}
  & \abs{x_{\frac{N}{2} + 1}(t, \omega) - \bar{\theta}_{\frac{N}{2} + 1}(t, \omega)} \\
  & \ge \abs{\bar{x}(t, \omega) - \theta_{\frac{N}{2} + 1}} - \abs{e_{\frac{N}{2} + 1}(t, \omega)} \\
  & \ge \abs{\theta_{\frac{N}{2}} - \theta_{\frac{N}{2}+1}} - \abs{\bar{x}(t, \omega) - \theta_{\frac{N}{2}}} -\abs{e_{\frac{N+1}{2}}(t, \omega)}\\
  & \ge \abs{\theta_{\frac{N}{2}} - \theta_{\frac{N}{2}+1}} - \gamma_t \ge \gamma_t.
\end{split}
\end{equation*}
Hence, $k_{N/2 + 1}\left(t, \omega)(x_{N/2 + 1}(t, \omega) - \bar{\theta}_{N/2 + 1}(t, \omega)\right) = - \gamma_{t}$ by the same argument in \eqref{eq:eachlb}\eqref{eq:equal-sign}. Then, (\ref{eq:avgdym}) reduces to
\begin{equation}
\label{eq:even_avg}
  \bar{x}(t+1, \omega) = \bar{x}(t, \omega) - \frac{\alpha_t}{N}\left(x_{\frac{N}{2}}(t, \omega) - \bar{\theta}_{\frac{N}{2}}(t, \omega) - \gamma_t\right).
\end{equation}
It follows that $\abs{\bar{x}(t+1, \omega) - \bar{x}(t, \omega)} \le 2N^{-1} \alpha_t \gamma_t$. Take $t_6$ as the least $t \ge t_2$ such that $2N^{-1} \alpha_t \gamma_t < \abs{\theta_{N/2} - \theta_{N/2+1}}$. Then, for $t \ge t_6$, we either have $\bar{x}(t+1, \omega) \in \Theta$ or $\text{dist}(\bar{x}(t+1, \omega), \Theta) = \abs{\bar{x}(t+1, \omega) - \theta_{N/2}} > 0$. In the first case, $\text{dist}(\bar{x}(t+1, \omega), \Theta) = 0$.  In the second case, $\bar{x}(t+1, \omega) < \theta_{N/2}$ and
\begin{equation}
\label{eq:avg_cvg_even}
\begin{split}
  & \text{dist}(\bar{x}(t+1, \omega), \Theta) 
  \\
  & \le \abs{\bar{x}(t, \omega) - \frac{\alpha_t}{N}\left(-\gamma_t + x_{\frac{N}{2}}(t, \omega) - \bar{\theta}_{\frac{N}{2}}(t, \omega)\right) - \theta_{\frac{N}{2}}} \\
  & \le \abs{(1 -  \frac{\alpha_t}{N})(\bar{x}(t, \omega) - \theta_{\frac{N}{2}}) + \frac{\alpha_t \gamma_t}{N}} + \frac{\alpha_t}{N}\abs{e_{\frac{N}{2}}(t, \omega)} \\
  & \le \max\{(1 -  \frac{\alpha_t}{N}) \abs{\bar{x}(t, \omega) - \theta_{\frac{N}{2}}}, \frac{\alpha_t \gamma_t}{N}\} + \frac{\alpha_t}{N}\abs{e_{\frac{N}{2}}(t, \omega)}.
\end{split}
\end{equation}
If $(1 -  N^{-1}\alpha_t)\abs{\bar{x}(t, \omega) - \theta_{N/2}} \ge N^{-1} \alpha_t \gamma_t$, the above relation falls into the same pursuit of (\ref{eq:avg-contr-odd}) and thus there exists $t_{7} \ge t_6$ such that $\text{dist}(\bar{x}(t+1, \omega), \Theta) \le \bar{\gamma}_{t+1, \omega}$ for $t \ge t_7$. Otherwise, to show $\text{dist}(\bar{x}(t+1, \omega), \Theta) \le \bar{\gamma}_{t+1, \omega}$ it suffices to show 
\begin{equation*}
\begin{split}
  \frac{\alpha_t \gamma_t}{N} + \frac{\alpha_t}{N}\abs{e_{\frac{N}{2}}(t, \omega)} \le \bar{\gamma}_{t+1, \omega}. 
\end{split}
\end{equation*}
Substitute \eqref{eq:all_errs_bounds} into the display above, it suffices to show
\begin{equation*}
\begin{split}
   & \frac{\alpha_0 \gamma_0}{(t+1)^{\tau_1}} +  \frac{\alpha_0 c_{\eta, \omega}}{(t+1)^{\eta + \tau_1 - \tau_3}} + \frac{\alpha_0 c_{\delta, \omega}}{(t+1)^{\delta_1 + \tau_1 - \tau_3}} + \frac{N c_{\eta, \omega}}{(t+2)^{\eta - \tau_3}} \\
   & + \frac{N c_{\delta, \omega}}{(t+2)^{\delta_1 - \tau_3}} \le N \gamma_0, 
\end{split}
\end{equation*}
The left hand side is decreasing to 0 as $\tau_3 < \delta_1, \tau_3 < \eta$, so we can take $t_8$ as the least $t \ge t_7$ such that the above relation holds. Taking $T_0$ as $t_8$ addresses this case. 

\textit{Step 2c}, when $N$ is even and $\bar{x}(t, \omega) \in \Theta$, we show that $\bar{x}(t+1, \omega) \in \Theta$. In this case, \eqref{eq:avgdym} reduces to
\begin{equation}
\label{eq:part3}
\begin{split}
    & \bar{x}(t+1, \omega) - \bar{x}(t, \omega) \\
	& = - \frac{\alpha_t}{N} k_{\frac{N}{2}} (t) \underbrace{ (x_{\frac{N}{2}}(t, \omega) - \bar{\theta}_{\frac{N}{2}}(t, \omega))}_{\mathcal{T}_2} \\
	& \quad - \frac{\alpha_t}{N} k_{\frac{N}{2}+1} (t) \underbrace{ (x_{\frac{N}{2}+1}(t, \omega) - \bar{\theta}_{\frac{N}{2}+1}(t, \omega))}_{\mathcal{T}_3}.
\end{split}
\end{equation}
We discuss two possibilities, one is that $|\T_2| \le \gamma_t $ or $|\T_3| \le \gamma_t$, the other is both $|\T_2 > \gamma_t$ and $|\T_2| > \gamma_t$. First, consider $\abs{\mathcal{T}_2} \le \gamma_t$ ($|\T_3| \le \gamma_t$ is a symmetric case). Since
\begin{equation*}
\begin{split}
    \abs{\bar{x}(t, \omega) - \theta_{\frac{N}{2}}} 
    & \le \abs{\T_2} + \abs{e_{\frac{N}{2}+1}(t, \omega)} \\
    & \le \gamma_t + \frac{c_{\delta, \omega}}{(t+1)^{\delta_1}} + \frac{c_{\eta, \omega}}{(t+1)^{\eta}},
\end{split}
\end{equation*}
and $\bar{x}(t, \omega) \in \Theta$, we obtain
\begin{equation*}
\begin{split}
    -\T_3
    & =  \theta_{\frac{N}{2}+1} - \bar{x}(t, \omega) - e_{\frac{N}{2}+1}(t, \omega) \\
    & \ge \theta_{\frac{N}{2} + 1} - \theta_{\frac{N}{2}} - \abs{\bar{x}(t, \omega) - \theta_{\frac{N}{2}}} - \abs{ e_{\frac{N}{2}+1}(t, \omega)} \\
    & \ge d_\min - \gamma_t - \frac{2c_{\delta, \omega}}{(t+1)^{\delta_1}} - \frac{2c_{\eta, \omega}}{(t+1)^{\eta}}.
\end{split}
\end{equation*}
Take $t_9$ as the least $t \ge t_8$ so that the right hand side on the last line of the above relation is larger than $\gamma_t$. Then, substituting the values of $k_{N/2}(t)$ and $k_{N/2+1}(t)$ into \eqref{eq:part3} we have $\bar{x}(t+1, \omega) \ge \bar{x}(t, \omega)$. Take $t_{10}$ as the least $t \ge t_9$ such that $2N^{-1}\alpha_t \gamma_t \le d_\min - (\gamma_t + c_{\delta, \omega}(t+1)^{-\delta_1} + c_{\eta, \omega}(t+1)^{-\eta})$, then by \eqref{eq:poss1-bound}, for $t \ge t_{10}$ we have
\begin{equation}
\label{eq:poss1-bound}
\begin{split}
    & \bar{x}(t+1, \omega) - \bar{x}(t, \omega) \\
    & \le \frac{2\alpha_t \gamma_t}{N} \le \theta_{\frac{N}{2}+1} - \theta_{\frac{N}{2}} - \abs{\bar{x}(t, \omega) - \theta_{\frac{N}{2}}} \\
    & \le \theta_{\frac{N}{2}+1} - \bar{x}(t, \omega), 
\end{split} 
\end{equation}
so we have $\bar{x}(t, \omega) \le \bar{x}(t+1, \omega) \le \theta_{N/2+1}$, and thus $\bar{x}(t+1, \omega) \in \Theta$. Second, if both $\abs{\T_2} > \gamma_t$ and $\abs{\T_3} > \gamma_t$, where it is easy to check that under $t \ge t_{10}$, $\T_2$ and $\T_3$ are of opposite signs and thus $\bar{x}(t+1, \omega) = \bar{x}(t, \omega) \in \Theta$. Suppose both $\T_2, \T_3 < -\gamma_t$, then 
\begin{equation*}
\begin{split}
    \bar{x}(t, \omega) - x_{\frac{N}{2}} 
    & = \T_2 - e_{\frac{N}{2}}(t, \omega) \\
    & < -\gamma_t +  \frac{c_{\delta, \omega}}{(t+1)^{\delta_1}} + \frac{c_{\eta, \omega}}{(t+1)^{\eta}}\\
    & = -\bar{\gamma}_t < 0,
\end{split}
\end{equation*}
which contradicts with $\bar{x}(t, \omega) \in \Theta$. Similar argument applies for $\T_2, \T_3 > \gamma_t$. Thus, taking $T_0 = t_{10}$ addresses this case.

\textit{Step 3}, in all there cases, in the sample path $\omega$ such that \eqref{eq:all_errs_bounds} holds, there exist some finite $T_0$ such that if for some $T_1 \ge T_0$ we have $\text{dist}(\bar{x}(T_1), \Theta) \le \bar{\gamma}_{T_1}$ for some $T_1 \ge T_0$, then $\text{dist}(\bar{x}(T_1 + 1), \Theta) \le \bar{\gamma}_{T_1 + 1}$. Since such sample paths $\omega$ has probability 1, $T_0$ exists a.s.
\end{proof}

The following lemma asserts that hypothesis of Lemma \ref{eq:localavg} will always be true, and thus establishes the global convergence of Algorithm $(\ref{al:each})$.
\begin{lemma}
\label{lm:globalcvg}
For $T_0, \bar{\gamma}_t$ established in Lemma \ref{lm:localcvg}, there exists a finite $T_1 \ge T_0$ such that $\text{dist}(\bar{x}(T_1), \Theta) \le \bar{\gamma}_{T_1}$.
\end{lemma}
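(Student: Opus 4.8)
The plan is to argue by contradiction, working on a fixed sample path $\omega$ on which the error bounds \eqref{eq:all_errs_bounds} hold and on which the time $T_0$ furnished by Lemma~\ref{lm:localcvg} is finite; by Lemmas~\ref{lm:obsbd} and~\ref{lm:aymcns} these sample paths have probability one, and for them $\bar\gamma_t>0$ for all $t\ge T_0$. Suppose, contrary to the claim, that $\text{dist}(\bar x(t,\omega),\Theta)>\bar\gamma_t$ for \emph{every} $t\ge T_0$; since $\bar\gamma_t>0$ this forces $\bar x(t,\omega)\notin\Theta$ for all such $t$. I will show this produces a monotone, non-summable drift of the network average toward $\Theta$, which is impossible.

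The argument rests on two facts about the average recursion \eqref{eq:avgdym}. First, the clipping rule \eqref{al:k} gives $|k_n(t)(x_n(t,\omega)-\bar\theta_n(t,\omega))|=\min\{|x_n(t,\omega)-\bar\theta_n(t,\omega)|,\gamma_t\}\le\gamma_t$, so the increments are small: $|\bar x(t+1,\omega)-\bar x(t,\omega)|\le\alpha_t\gamma_t$. Second, there is a uniform downhill drift: I claim there is a finite $T_0'\ge T_0$ such that for every $t\ge T_0'$ the increment $\bar x(t+1,\omega)-\bar x(t,\omega)$ is directed from $\bar x(t,\omega)$ toward $\Theta$ and has magnitude at least $\tfrac{\alpha_t\gamma_t}{2N}$. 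This follows from the same sign bookkeeping used in Steps~1--2 of Lemma~\ref{lm:localcvg}: write $x_n(t,\omega)-\bar\theta_n(t,\omega)=\bar x(t,\omega)-\theta_n+e_n(t,\omega)$ as in \eqref{eq:defen}, with $|e_n(t,\omega)|\le E_t:=c_{\delta,\omega}(t+1)^{-\delta_1}+c_{\eta,\omega}(t+1)^{-\eta}$, and assume without loss of generality that $\bar x(t,\omega)$ lies to the left of $\Theta$. Then at least $\lfloor N/2\rfloor$ agents have $\theta_n-\bar x(t,\omega)\ge d_{\min}$, so for $t$ large (using $d_{\min}>\gamma_t+E_t$) their innovations are clipped to $-\gamma_t$ and each contributes $+\tfrac{\alpha_t}{N}\gamma_t$ to the increment; the unique index $m$ with $\theta_m\in\Theta$ closest to $\bar x(t,\omega)$ has a negative innovation of magnitude at least $\text{dist}(\bar x(t,\omega),\Theta)-E_t>\bar\gamma_t-E_t=\gamma_t-2E_t$, contributing at least $\tfrac{\alpha_t}{N}(\gamma_t-2E_t)$; and each of the at most $\lceil N/2\rceil-1$ remaining agents contributes at least $-\tfrac{\alpha_t}{N}\gamma_t$ because $|k_n(t)(x_n(t,\omega)-\bar\theta_n(t,\omega))|\le\gamma_t$. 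Summing, $\bar x(t+1,\omega)-\bar x(t,\omega)\ge\tfrac{\alpha_t}{N}(\gamma_t-2E_t)\ge\tfrac{\alpha_t\gamma_t}{2N}$ for $t$ large, since $\tau_3<\delta_1$ and $\tau_3<\eta$ make $E_t=o(\gamma_t)$; in particular the increment is positive, i.e., toward $\Theta$.

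It remains to combine the two facts. The increment bound shows $\bar x(t,\omega)$ cannot escape to the opposite side of $\Theta$ while the hypothesis holds: when $N$ is even, $\Theta$ has width at least $d_{\min}$, which exceeds $\alpha_t\gamma_t$ for $t$ large, and stepping into $\Theta$ would contradict the hypothesis; when $N$ is odd, a step past the single point of $\Theta$ would leave $\text{dist}(\bar x(t+1,\omega),\Theta)\le\alpha_t\gamma_t<\bar\gamma_{t+1}$ (using $\alpha_t\gamma_t=o(\bar\gamma_{t+1})$, valid since $\tau_1+\tau_3<1$ by \eqref{al:t3}), again a contradiction. Hence $\bar x(t,\omega)$ stays on one fixed side of $\Theta$ for all $t\ge T_0'$, so $\text{dist}(\bar x(t,\omega),\Theta)$ is nonincreasing and, for every $T>T_0'$, telescoping gives $\text{dist}(\bar x(T_0',\omega),\Theta)\ge\text{dist}(\bar x(T_0',\omega),\Theta)-\text{dist}(\bar x(T,\omega),\Theta)=|\bar x(T,\omega)-\bar x(T_0',\omega)|\ge\sum_{t=T_0'}^{T-1}\tfrac{\alpha_t\gamma_t}{2N}$. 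But $\sum_t\alpha_t\gamma_t=\alpha_0\gamma_0\sum_t(t+1)^{-(\tau_1+\tau_3)}=\infty$ since $\tau_1+\tau_3<1$, so the right side diverges as $T\to\infty$ while the left side is a fixed finite number --- a contradiction. Therefore the hypothesis fails: there is a finite $T_1\ge T_0$ with $\text{dist}(\bar x(T_1,\omega),\Theta)\le\bar\gamma_{T_1}$, and since the sample paths considered have probability one, $T_1$ exists almost surely.

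I expect the main obstacle to be the uniform drift estimate of the second paragraph, and within it the odd-$N$ case: there the $\lfloor N/2\rfloor=(N-1)/2$ favorable (clipped) agents only exactly cancel the $\lceil N/2\rceil-1=(N-1)/2$ possibly adverse ones, so the entire net push toward $\Theta$ must come from the near-unclipped innovation of the single agent $\theta_m\in\Theta$, whose size $\ge\bar\gamma_t-E_t$ is positive only because $\gamma_t$ dominates the combined local-averaging and consensus error $E_t$ --- which is precisely the reason for imposing $\tau_3<\delta_1$ and $\tau_3<\eta$. A secondary subtlety, excluding oscillation across $\Theta$ when $N$ is odd, is handled by the increment bound together with $\tau_1+\tau_3<1$.
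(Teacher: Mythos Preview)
Your proposal is correct and follows essentially the same strategy as the paper: argue by contradiction on a probability-one sample path, show that under the contradiction hypothesis the network average drifts monotonically toward $\Theta$ by at least a constant times $\alpha_t\gamma_t/N$ per step, rule out overshooting $\Theta$ via the increment bound $|\bar x(t+1)-\bar x(t)|\le\alpha_t\gamma_t$, and then telescope to obtain a contradiction from $\sum_t\alpha_t\gamma_t=\infty$ (i.e., $\tau_1+\tau_3<1$). The only cosmetic difference is that the paper organizes the drift estimate by a case split on which agent, if any, is unclipped (Steps~2a--2c), whereas you obtain the same bound through a single counting argument partitioning agents into the $\lfloor N/2\rfloor$ far-right indices, the near-median index $m$, and the $\lceil N/2\rceil-1$ remaining indices.
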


\begin{proof}
We prove this lemma by contradiction. We work on sample paths $\omega$ as in Lemma \ref{lm:localcvg}. Suppose, on the contrary, for all $t \ge T_{0, \omega}, \ \text{dist}(\bar{x}(t, \omega), \Theta) > \bar{\gamma}_{t, \omega}$. We now show that this leads to  $\lim_{t\rightarrow\infty}\text{dist}(\bar{x}(t, \omega), \Theta)=-\infty$, a contradiction.

\textit{Step 1}, we first discuss the value of each \textit{clipped innovation} $k_n(t)[x(t, \omega) - \bar{\theta}_n(t, \omega)]$. We show that there exists some finite $T_2 \ge T_{0, \omega}$ such that $\forall t \ge T_2$, there exists at most one $n \in [N]$ such that $\abs{x_{n}(t, \omega) - \bar{\theta}_{n}(t, \omega)} \le \gamma_t$, which implies there exists at most one $n \in [N]$ such that $k_n(t) = 1$. Take $T_2$ as the smallest $t \ge T_0$ such that $2\gamma_t < d_{\min} - 2 c_{\delta}(t+1)^{-\delta_1} - 2 c_{\eta, \omega}(t+1)^{-\eta}.$
Suppose there exist two different $m, n \in [N]$ such that for both $i = m, n, \abs{x_{i}(t, \omega) - \bar{\theta}_{i}(t, \omega)} \le \gamma_t$ . Then, for $i = m, n$, 
\begin{equation*}
\begin{split}
  & \gamma_t \ge \abs{x_i(t, \omega) - \bar{\theta}_i(t, \omega)} \ge \abs{\bar{x}(t, \omega) - \theta_i} - \abs{e_i(t, \omega)}. 
\end{split}
\end{equation*}
Summing above relations for $i = m, n$, combining with (\ref{eq:all_errs_bounds}) we have $2\gamma_t \ge \abs{\theta_m - \theta_n} - 2 c_{\delta}{(t+1)^{-\delta_1}} - 2 c_{\eta, \omega}{(t+1)^{-\eta}}$, which contradicts with the choice of $T_2$. If it exists, for the agent $p$ that satisfies $\abs{x_{p}(t, \omega) - \bar{\theta}_{p}(t, \omega)} \le \gamma_t$ and $\theta_p \in \Theta$, take $T_3$ as the least $t \ge T_2$ such that $\bar{\gamma}_{t, \omega} \ge c_{\delta, \omega}(t+1)^{-\delta} + c_{\eta, \omega}(t+1)^{-\eta}$. Thus, by the contradiction hypothesis, 
\begin{equation}
\label{eq:signbound2}
\begin{split}
    \abs{e_p(t, \omega)} \le  \bar{\gamma}_{t, \omega} < \text{dist}(\bar{x}(t, \omega), \Theta) =  \abs{\bar{x}(t, \omega) - \theta_p}.
\end{split}
\end{equation}
Decomposing $x_p(t, \omega) - \bar{\theta}_p(t, \omega) = \bar{x}(t, \omega) - \theta_p + e_p(t, \omega)$ and combing with \eqref{eq:signbound2} we have 
\begin{equation}
\label{eq:closesign}
    \text{sign}(x_p(t, \omega) - \bar{\theta}_p(t, \omega)) = \text{sign}(\bar{x}(t, \omega) - \theta_p).
\end{equation}
For any agent $m$ such that $\abs{x_{m}(t, \omega) - \bar{\theta}_{m}(t, \omega)} > \gamma_t$, with the same reasoning in \eqref{eq:eachlb}\eqref{eq:equal-sign} we have
\begin{equation}
\label{eq:equal-sign2}
    k_m(x_m(t, \omega) - \bar{\theta}_m(t, \omega)) = \gamma_t \text{sign}(\bar{x}(t, \omega) - \theta_m).
\end{equation}

We next consider $t \ge T_3$ and assume $\bar{x}(t, \omega)$ is smaller than median(s) without loss of generality. We discuss all three possible cases.

\textit{Step 2a}, if there exists $p$ such that $\abs{x_{p}(t, \omega) - \bar{\theta}_p(t, \omega)} \le \gamma_t$ and $\theta_p \in \Theta$. Then,
\begin{equation*}
\begin{split}
    & \bar{x}(t, \omega) - \theta_{p-1} \\
    & =  x_p(t, \omega) - \bar{\theta}_p(t, \omega) + e_p(t, \omega) + \theta_p - \theta_{p - 1} > 0,
\end{split}
\end{equation*}
since by the definition of $T_2$,
\begin{equation*}
\begin{split}
    & \theta_p - \theta_{p-1} \ge d_{\min} > c_{\delta, \omega}(t+1)^{-\delta_1} + c_{\eta, \omega} (t+1)^{-\eta} + \gamma_t \\
    & \ge \abs{e_p(t, \omega)} + \abs{x_p(t, \omega) - \bar{\theta}_p(t, \omega)}.
\end{split}
\end{equation*}
Thus, we have $\theta_{p-1} < \bar{x}(t, \omega) < \theta_p$. By \eqref{eq:equal-sign2}, and the definition of median
\begin{equation}
\label{eq:net_effect}
\sum_{n \ne p}k_n(t, \omega)(x_n(t, \omega) - \bar{\theta}_n(t, \omega)) = 
\begin{cases}
0, & N \text{ is odd}, \\
-\gamma_t, & N \text{ is even}.
\end{cases}
\end{equation}
Thus, \eqref{eq:avgdym} reduces to
\begin{equation}
\label{eq:case1}
\begin{split}
  & \bar{x}(t+1, \omega) - \bar{x}(t, \omega)  \\
  & = 
    \begin{cases}
    - \frac{\alpha_t}{N} (x_p(t, \omega) - \bar{\theta}_p(t, \omega) - \gamma_t), &  N \text{ is even,} \\
    - \frac{\alpha_t}{N} (x_p(t, \omega) - \bar{\theta}_p(t, \omega) ), & N \text{ is odd.} 
  \end{cases} \\
  & \ge 0.
\end{split}
\end{equation}
where the last inequality follows from \eqref{eq:closesign} and $\bar{x}(t, \omega) < \theta_p$. By the contradiction hypothesis,
\begin{equation*}
\begin{split}
    & \abs{x_p(t, \omega) - \bar{\theta}_p(t, \omega)} \\
    & \ge \abs{\bar{x}(t, \omega) - \theta_p} - \abs{e_p(t, \omega} \\
    & > \bar{\gamma}_t - c_{\delta, \omega} (t+1)^{-\delta} - c_{\eta, \omega} (t+1)^{-\eta}, 
\end{split}
\end{equation*}
it follows from (\ref{eq:case1}) that for both even and odd $N$,
\begin{equation}
\label{eq:distdec1}
\begin{split}
	&  \bar{x}(t+1, \omega) - \bar{x}(t, \omega)  \\
	& \ge   \frac{\alpha_t}{N}(\gamma_t - \frac{2c_{\delta, \omega}}{(t+1)^{\delta_1}} - \frac{2c_{\eta, \omega}}{(t+1)^{\eta}}). 
\end{split}
\end{equation}

\textit{Step 2b}, consider the case that there exists one $q$ such that $\abs{x_{q}(t, \omega) - \bar{\theta}_q(t, \omega)} \le \gamma_t$, but $\theta_q \notin \Theta$. Since $\bar{x}(t, \omega)$ is smaller than the median(s), $\theta_q$ is also smaller than the median(s), otherwise $\abs{\bar{x}(t, \omega) - \theta_q } \ge d_\min > \gamma_t + c_{\delta, \omega}(t+1)^{-\delta} + c_{\eta, \omega}(t+1)^{-\eta}$ by the definition of $T_2$, and implies the contradiction that
\begin{equation*}
\begin{split}
    \abs{x_q(t, \omega) - \bar{\theta}_q(t, \omega)}  \ge \abs{\bar{x}(t, \omega) - \theta_q} - \abs{e_q(t, \omega)} > \gamma_t.
\end{split}
\end{equation*}
Then, from \eqref{eq:equal-sign2} 
\begin{equation}
\label{eq:case2}
\begin{split}
    & \bar{x}(t+1, \omega) - \bar{x}(t, \omega) \\
    & =   - \frac{\alpha_t \gamma_t}{N}\sum_{n \ne q} \text{sign}\left[x_n(t, \omega) - \bar{\theta}_n(t, \omega) \right] \\
    & \qquad - \frac{\alpha_t}{N} (x_q(t, \omega) - \bar{\theta}_q(t, \omega)) \\
    & = - \frac{\alpha_t \gamma_t}{N} \sum_{n \ne q} \text{sign}\left[ \bar{x}(t, \omega) - \theta_n \right] \\
    & \qquad - \frac{\alpha_t}{N} (x_q(t, \omega) - \bar{\theta}_q(t, \omega).
\end{split}
\end{equation}
Since $\bar{x}(t, \omega), \theta_q$ are smaller than the median(s), by counting $\theta_n$ on both sides of $\bar{x}(t, \omega)$ we have
\begin{align*}
\sum_{n \ne q} \text{sign}\left[ \bar{x}(t, \omega) - \theta_n \right] \le -2.
\end{align*}
Hence, it follows from \eqref{eq:case2} that
\begin{equation}
\label{eq:case2dec}
\bar{x}(t+1, \omega) - \bar{x}(t, \omega) \ge \frac{\alpha_t \gamma_t}{N}.
\end{equation}

\textit{Step 2c}, if for each agent $n$, $\abs{x_{n}(t, \omega) - \bar{\theta}_n(t, \omega)} > \gamma_t$, then from \eqref{eq:equal-sign2} we have
\begin{equation}
\label{eq:case3}
\begin{split}
    & \bar{x}(t+1, \omega) - \bar{x}(t, \omega) \\
    & = - \frac{\alpha_t \gamma_t}{N}\sum_{n \in [N]} \text{sign}\left[x_n(t, \omega) - \bar{\theta}_n(t, \omega) \right] \\
    & = - \frac{\alpha_t \gamma_t}{N} \sum_{n \in [N]} \text{sign}\left[ \bar{x}(t, \omega) - \theta_n \right]. 
\end{split}
\end{equation}
Since $\bar{x}(t, \omega)$ is less than the median(s), by counting number of $\theta_n$ on both sides of $\bar{x}(t, \omega)$ we still obtain \eqref{eq:case2dec}.

Apply similar argument as on (\ref{eq:even_avg}). There exists finite $T_4 \ge T_3$ such that $\bar{x}(t+1, \omega)$ is still less than the medians as $\bar{x}(t, \omega)$ is less than the medians. Take $T_5$ as the least $t \ge T_4$ such that $\bar{\gamma}_t - c_{\delta, \omega}(t+1)^{-\delta} - c_{\eta, \omega} (t+1)^{-\eta} \ge c_\gamma \gamma_t$ for some $0 < c_\gamma < 1$. Then in either \textit{Step 2a, 2b, 2c}, by \eqref{eq:distdec1}\eqref{eq:case2dec} we obtain 
\begin{equation*}
  \text{dist}(\bar{x}(t, \omega), \Theta) - \text{dist}(\bar{x}(t+1, \omega), \Theta) \ge  \frac{c_\gamma \alpha_t \gamma_t}{N}. 
\end{equation*}
Summing over all $t \ge T_5$ leads to a contradiction in that $\text{dist}(\bar{x}(T_5, \omega)) - \lim_{t \rightarrow \infty} \text{dist}(\bar{x}(t), \omega) = c_{\gamma}N^{-1} \sum_{t=T_6}^{\infty} \alpha_t \gamma_t = \infty$ by the choice $\tau_3 < 1 - \tau_1$, and thus establishes the desired assertion.
\end{proof}

\begin{proof}[Proof of Theorem 1]
    By Lemma \ref{lm:localcvg}, \ref{lm:globalcvg}, there exists some finite $T_1$ such that for all $t \ge T_1, \text{dist}(\bar{x}(t)) \le \bar{\gamma}(t)$ a.s. Thus, we have \[\mathbb{P}\left(\lim_{t \rightarrow \infty} (t+1)^{\tau_3} \text{dist}(\bar{x}(t), \Theta) = 0\right) = 1.\] By Lemma \ref{lm:aymcns} we have \[\mathbb{P}\left(\lim_{t \rightarrow \infty} (t + 1)^{\tau_1 - \tau_2 + \tau_3 - \epsilon_1} \|\mathbf{x}(t) - P_N \mathbf{x}(t)\|_2 = 0\right) = 1\] for every $0 < \epsilon_1 < \tau_1 - \tau_2$. For any $n \in [N]$, by the triangle inequality, we have 
	 \begin{equation*}
	 \begin{split}
	     \text{dist}(x_n(t), \Theta) &\le \abs{x_n(t) - \bar{x}(t)} + \text{dist}(\bar{x}(t), \Theta) \\
	     & \le \|\mathbf{x}(t) - P_N \mathbf{x}(t)\|_2 + \text{dist}(\bar{x}(t), \Theta).
	 \end{split}
	 \end{equation*} 
	 Then, it follows that for all $n$, we have \[\mathbb{P}\left(\lim_{t \rightarrow \infty} (t+1)^{\tau_3} \text{dist}(x_n(t), \Theta) = 0\right) = 1.\]
\end{proof}

\section{Numerical experiments}

We generate two random geometric graphs Graph \ref{gh1} and Graph \ref{gh2}. Both graphs consist of 40 nodes but have different connectivities measured by the second largest eigenvalue of the graph Laplacians. The Laplacians of Graph 1 and Graph 2 are $\lambda_2(L_1) \approx 1.8, \ \lambda_2(L_2) \approx 7.2$ respectively, and from Figs. \ref{gh1} and \ref{gh2} below it is clear that Graph \ref{gh2} is more densely connected. Additionally, both graphs are undirected and connected. We simulate random networks by assigning dropout probability for each link. In our experiments, we use dropout probabilities $0.1$ and $0.5$.

\begin{figure}[!htb]
   \begin{minipage}{0.22\textwidth}
     \centering
	\includegraphics[width=\linewidth]{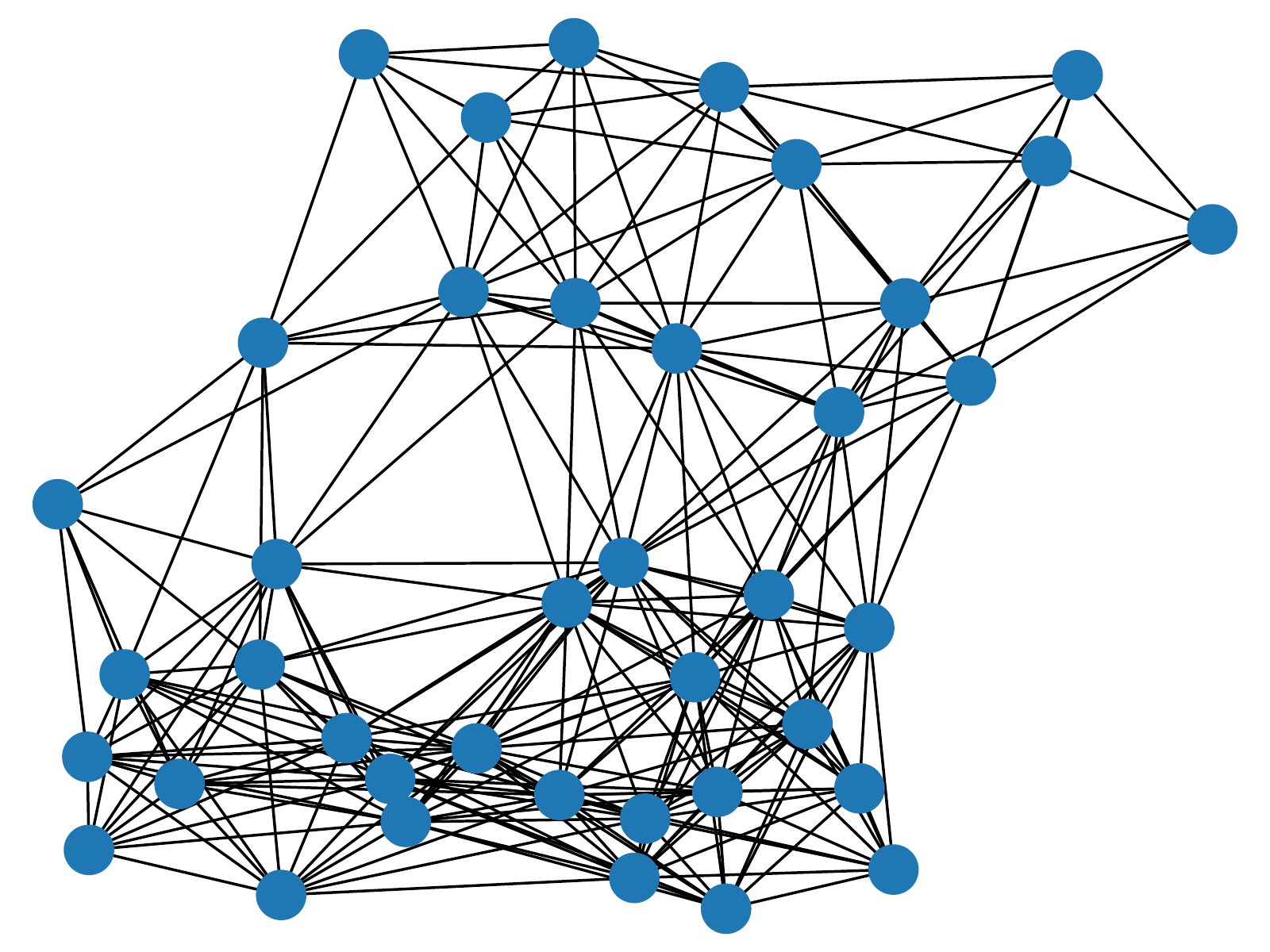}
	\caption{Graph 1, $\lambda_2(L_1) \approx 1.8$}
	\label{gh1}
   \end{minipage}\hfill
   \begin{minipage}{0.22\textwidth}
     \centering
	\includegraphics[width=\linewidth]{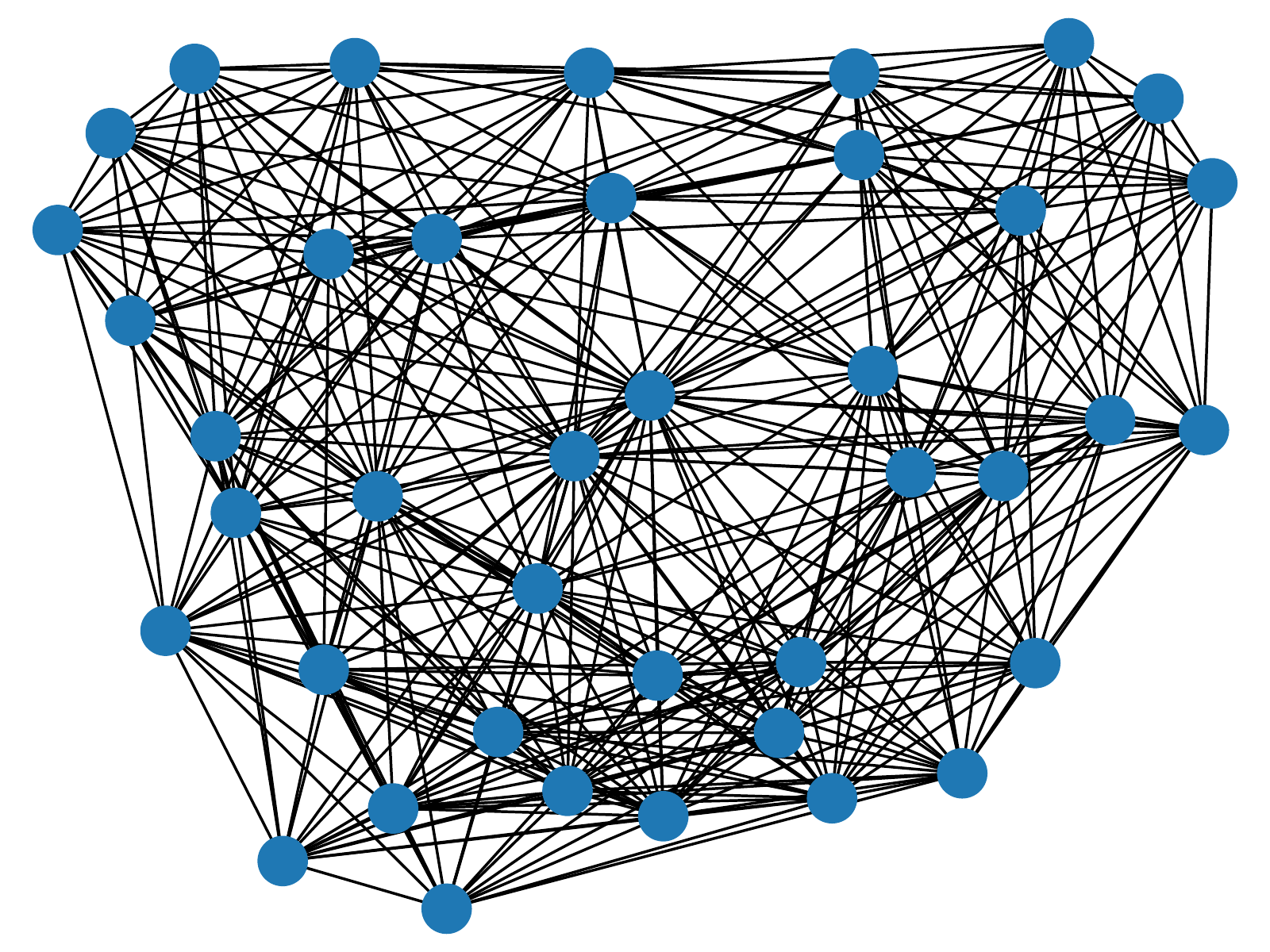}
	\caption{Graph 2, $\lambda_2(L_2) \approx 7.2$}
	\label{gh2}
   \end{minipage}
\end{figure}

We consider the problem setting $\theta_n = n$ for $n = 1, \ldots, 40$, $v_n(t) = 10/(t+1)$, and $w_n(t) \sim \mathcal{N}(0, 4)$. Note that we consider perturbations as the sum of a deterministic sequence and i.i.d. white noises. The deterministic sequence is not known to the agents as a prior. Since the largest possible deterministic errors as as tolerable by DMED are used, this problem is the hardest in the problem class we consider.
\begin{figure}[ht]
\centering
\includegraphics[width=0.48\textwidth]{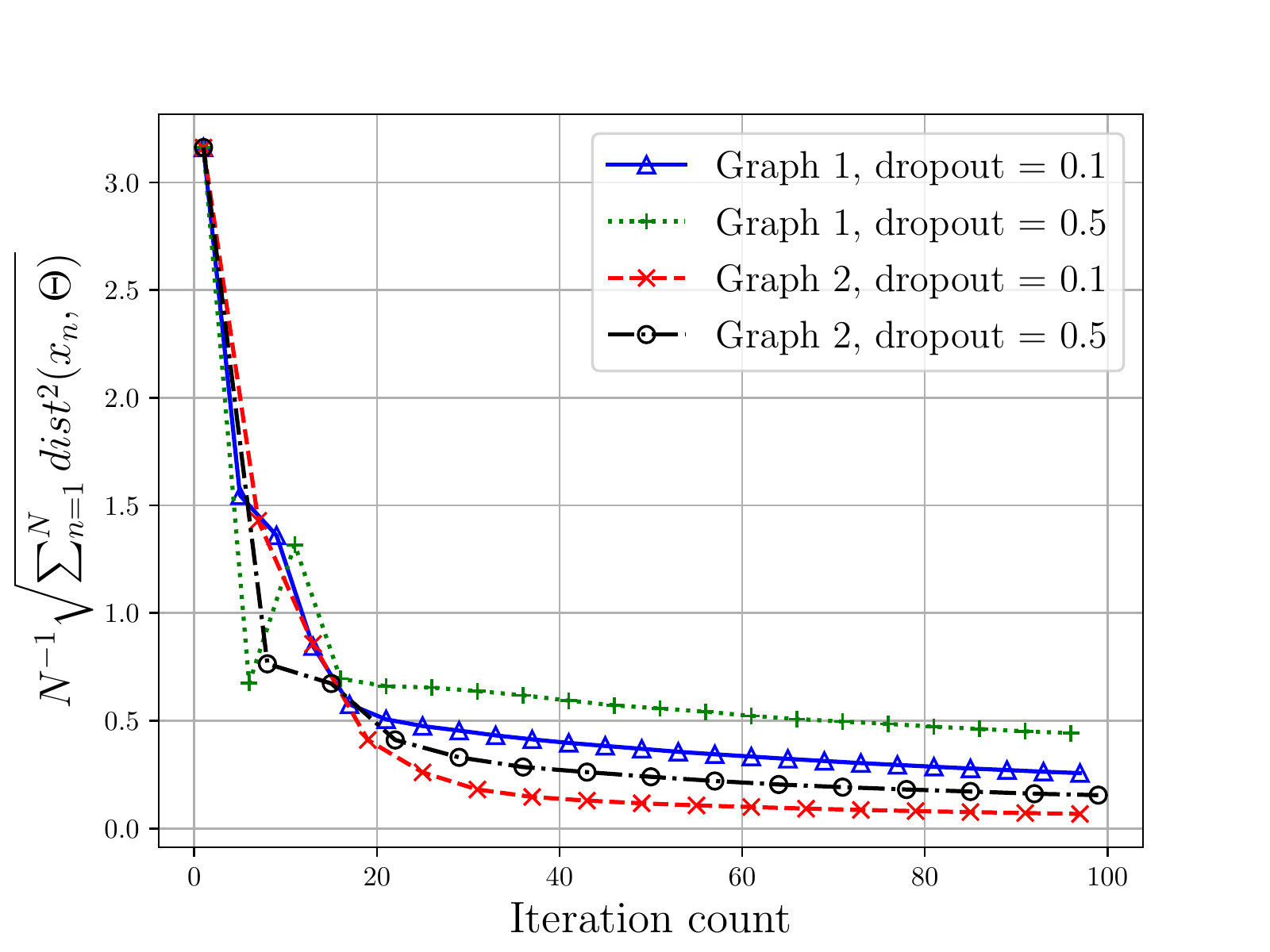}
\caption{Iteration count $t$ vs $N^{-1}\sqrt{\sum_{n=1}^N \text{dist}^2(x_n, \Theta)}$ with two networks with different dropout probability}
\label{fg:results}
\end{figure}

By assigning two different dropout probabilities $0.1$ and $0.5$ for each link in Graph \ref{gh1} and Graph \ref{gh2} respectively, we conduct experiments on $4$ random networks. In all $4$ random networks, we set the same parameters $\alpha_t = (t+1)^{-0.6}, \beta_t = (t+1)^{-0.2}/10, \gamma_t = 20(t+1)^{-0.3}, c_\mu = 10, \mu = 0.9$, and all local estimates start from 0. Given the random nature of the considered networks, we average the network average distance to a set of medians, i.e., $N^{-1}\sqrt{\sum_{n=1}^N \text{dist}^2(x_n, \Theta)}$ over $100$ network instances for each of $4$ experiments, and present the experiments results in Fig. \ref{fg:results}.

The simulation results in Fig. \ref{fg:results} demonstrate our theoretical findings that each local estimate converges to the set of medians sublinearly. The results validates the advantage of DMED over previous works that rely on connected networks, in that if a large proportion of links drop out at each time instance the DMED still converges to the set of medians sublinearly. From Fig. \ref{fg:results} we also observe that better connectivity and lower dropout probabilities could benefit the convergence speed of DMED. This emprical finding about convergence rate characteristics is not formally investigated in our analysis, while the intuition behind it is that better connectivity (measured by the second eigenvalue of the Laplacian) and lower dropout probabilities tend to speed up consensus type processes.

\section{Conclusion}
In this paper, we have studied the problem of dynamic median consensus over random networks that are required to connected only on average. We have considered a multi-agent networked setup in which each agent makes local observations, corrupted by decaying bias and white noise, of a \textit{distinct} value. The agents' objective is to estimate the median of the local values. We presented DMED, a \textit{consensus+innovations} type algorithm with \textit{clipped innovations} to address this problem. Under the DMED algorithm, the agents' local iterates converge almost surely to the set of medians at a sublinear rate. Finally, we validate the performance of our algorithm through numerical simulations. 

\bibliographystyle{ieeetr}
\bibliography{median}

\begin{thebibliography}{10}

\bibitem{kempe2003gossip}
D.~Kempe, A.~Dobra, and J.~Gehrke, ``Gossip-based computation of aggregate
  information,'' in {\em 44th Annual IEEE Symposium on Foundations of Computer
  Science, 2003. Proceedings.}, pp.~482--491, IEEE, 2003.

\bibitem{olfati2004consensus}
R.~Olfati-Saber and R.~M. Murray, ``Consensus problems in networks of agents
  with switching topology and time-delays,'' {\em IEEE Transactions on
  automatic control}, vol.~49, no.~9, pp.~1520--1533, 2004.

\bibitem{xiao2004fast}
L.~Xiao and S.~Boyd, ``Fast linear iterations for distributed averaging,'' {\em
  Systems \& Control Letters}, vol.~53, no.~1, pp.~65--78, 2004.

\bibitem{kar2007distributed}
S.~Kar and J.~M.~F. Moura, ``Distributed average consensus in sensor networks
  with random link failures,'' in {\em 2007 IEEE International Conference on
  Acoustics, Speech and Signal Processing-ICASSP'07}, vol.~2, pp.~II--1013,
  IEEE, 2007.

\bibitem{kar2008topology}
S.~Kar, S.~Aldosari, and J.~M.~F. Moura, ``Topology for distributed inference
  on graphs,'' {\em IEEE Transactions on Signal Processing}, vol.~56, no.~6,
  pp.~2609--2613, 2008.

\bibitem{leblanc2012consensus}
H.~J. LeBlanc, H.~Zhang, S.~Sundaram, and X.~Koutsoukos, ``Consensus of
  multi-agent networks in the presence of adversaries using only local
  information,'' in {\em Proceedings of the 1st international conference on
  High Confidence Networked Systems}, pp.~1--10, 2012.

\bibitem{ben2015robust}
W.~Ben-Ameur, P.~Bianchi, and J.~Jakubowicz, ``Robust distributed consensus
  using total variation,'' {\em IEEE Transactions on Automatic Control},
  vol.~61, no.~6, pp.~1550--1564, 2015.

\bibitem{franceschelli2016finite}
M.~Franceschelli, A.~Giua, and A.~Pisano, ``Finite-time consensus on the median
  value with robustness properties,'' {\em IEEE Transactions on Automatic
  Control}, vol.~62, no.~4, pp.~1652--1667, 2016.

\bibitem{pilloni2016robust}
A.~Pilloni, A.~Pisano, M.~Franceschelli, and E.~Usai, ``Robust distributed
  consensus on the median value for networks of heterogeneously perturbed
  agents,'' in {\em 2016 IEEE 55th Conference on Decision and Control (CDC)},
  pp.~6952--6957, IEEE, 2016.

\bibitem{dashti2019dynamic}
Z.~A.~Z. Sanai~Dashti, C.~Seatzu, and M.~Franceschelli, ``Dynamic consensus on
  the median value in open multi-agent systems,'' in {\em 2019 IEEE 58th
  Conference on Decision and Control (CDC)}, pp.~3691--3697, IEEE, 2019.

\bibitem{vasiljevic2020dynamic}
G.~Vasiljevi{\'c}, T.~Petrovi{\'c}, B.~Arbanas, and S.~Bogdan, ``Dynamic median
  consensus for marine multi-robot systems using acoustic communication,'' {\em
  IEEE Robotics and Automation Letters}, vol.~5, no.~4, pp.~5299--5306, 2020.

\bibitem{franceschelli2014finite}
M.~Franceschelli, A.~Giua, and A.~Pisano, ``Finite-time consensus on the median
  value by discontinuous control,'' in {\em 2014 American Control Conference},
  pp.~946--951, IEEE, 2014.

\bibitem{mohar1991laplacian}
B.~Mohar, Y.~Alavi, G.~Chartrand, and O.~Oellermann, ``The laplacian spectrum
  of graphs,'' {\em Graph theory, combinatorics, and applications}, vol.~2,
  no.~871-898, p.~12, 1991.

\bibitem{kar2012distributed}
S.~Kar, J.~M.~F. Moura, and K.~Ramanan, ``Distributed parameter estimation in
  sensor networks: Nonlinear observation models and imperfect communication,''
  {\em IEEE Transactions on Information Theory}, vol.~58, no.~6,
  pp.~3575--3605, 2012.

\bibitem{chen2019resilient}
Y.~Chen, S.~Kar, and J.~M.~F. Moura, ``Resilient distributed parameter
  estimation with heterogeneous data,'' {\em IEEE Transactions on Signal
  Processing}, vol.~67, no.~19, pp.~4918--4933, 2019.

\bibitem{kar2013distributed}
S.~Kar, J.~M. Moura, and H.~V. Poor, ``Distributed linear parameter estimation:
  Asymptotically efficient adaptive strategies,'' {\em SIAM Journal on Control
  and Optimization}, vol.~51, no.~3, pp.~2200--2229, 2013.

\end{thebibliography}
\end{document}